%
%
%
\documentclass{amsart}
\usepackage[utf8]{inputenc}
  \usepackage{amsmath}
\usepackage{enumerate}
  \usepackage{amssymb}
  \usepackage{amsthm}
  \usepackage{appendix}
  \usepackage{bm}
  \usepackage{graphicx}
  \usepackage{color}
\parskip=1.2\parskip
\usepackage{epsfig}
\usepackage{amsmath,amssymb}
\pagestyle{myheadings}
\hoffset - 0.5 in
\textheight=7.2in

\newtheorem{Th}{Theorem}[section]
\newtheorem{lem}[Th]{Lemma}

\theoremstyle{definition}

\newtheorem{Cor}[Th]{Corollary}
\newtheorem{Prop}[Th]{Proposition}

\theoremstyle{remark}
\newtheorem*{rem}{\bf Remarks}
\newtheorem*{que}{\bf Questions}

\newtheorem*{thank}{\ \ \ \bf Acknowledgment}

\numberwithin{equation}{section}


\newcommand{\tend}[3][]{\xrightarrow[#2\to#3]{#1}}

\newcommand{\egdef}{\stackrel{\textrm {def}}{=}}
\newcommand{\ds}{\displaystyle}

\newcommand{\N}{\mathbb{N}}

\newcommand{\cl}{\mathcal{L}}

\newcommand{\fr}{\textrm{fr}}


\title[A class of Littlewood polynomials that are not$\cdots$]{A class of Littlewood polynomials that are not $L^\alpha$-flat}

\author{e. H. el Abdalaoui}
\address{Normandy University of Rouen,
  Department of Mathematics, LMRS  UMR 6085 CNRS, Avenue de l'Universit\'e, BP.12,
76801 Saint Etienne du Rouvray - France .}
\email{elhoucein.elabdalaoui@univ-rouen.fr}

\urladdr{http://www.univ-rouen.fr/LMRS/Persopage/Elabdalaoui/}
\address{Department of Mathematics, University of Mumbai, Vidyanagari, Kalina,  Mumbai, 400098, India}
\email{mgnadkarni@gmail.com}
\urladdr{http://www.math.iitb.ac.in/seminar/nadkarni.html}

\date{\today}
\subjclass[2010]{Primary 42A05, 42A55, Secondary 37A05, 37A30}
\dedicatory{with an appendix jointly with M. G. Nadkarni}

\keywords{ merit factor, flat polynomials, ultraflat polynomials, Erd\"{o}s-Newman flatness problem, Littlewood flatness problem, digital transmission, palindromic polynomial, Turyn-Golay's conjecture.
}
\begin{document}
\maketitle
\begin{abstract} We exhibit a class of Littlewood polynomials
that are not $L^\alpha$-flat for any $\alpha \geq 0$.
Indeed, it is shown that the sequence of Littlewood polynomials is not $L^\alpha$-flat, $\alpha \geq 0$,
when the frequency of $-1$ is not in the interval $]\frac14,\frac34[$. We further obtain a generalization of Jensen-Jensen-Hoholdt's result
by establishing that the sequence of Littlewood polynomials is not $L^\alpha$-flat
for any $\alpha> 2$ if the frequency of $-1$ is not $\frac12$.  Finally,
we prove that the sequence of palindromic Littlewood polynomials with even degrees are not $L^\alpha$-flat for any $\alpha \geq 0$.
\end{abstract}

\section{Introduction}\label{intro}
The main goal of this paper is to establish that some class of Littlewood polynomials are not $L^\alpha$-flat, $\alpha \geq 0$.
Precisely, we prove that if the frequency of $-1$ is not in the interval $]\frac14,\frac34[$ or
if the sequence of Littlewood polynomials $(P_q)$ is palindromic with even degrees,
then $(P_q)$ are not $L^\alpha$-flat for any $\alpha \geq 0$.\\

We further establish that the sequence of Littlewood polynomials can not be $L^\alpha$-flat for $\alpha> 2$
if the frequency of $-1$ is not $\frac12$. This strengthen Theorem 2.1 of \cite{Hoholdt-JJ}.\\

It follows that the search for a sequence of $L^\alpha$-flat polynomials from the class $\cl$ can be restricted to the subclass of polynomials $P \in \cl$ which are not palindromic with even degrees and for which the frequency of $-1$ is in the interval $]\frac14,\frac34[$.\\

 The problem of flat polynomials goes back to Erd\"{o}s \cite{ErdosII}, \cite{ErdosIII} and Newman \cite{Newman}. Later, Littlewood asked, in his famous paper \cite{Littlewood}, among several questions, if there are positive absolute constants A and
$B$ such that, for arbitrarily large $n$, one can find a sequence ${\boldsymbol{\epsilon}}=(\epsilon_j)_{j=0}^{n-1} \in \{-1,1\}^n$  such that
$$A \sqrt{n} \leq \Big|\sum_{j=0}^{n-1}\epsilon_jz^j \Big| \leq B \sqrt{n},~~~~~~~~~~~~\forall~z \in S^1.$$
The polynomials of type $\ds L_n(z)\egdef\sum_{j=0}^{n-1}\epsilon_jz^j$ are called nowadays Littlewood polynomials or polynomials from class $\cl$. In the modern terminology, Littlewood's question can be reformulated as follows.
\begin{que}[Littlewood, 1966, \cite{Littlewood}, {\cite[Problem 19]{Littlewood2}}]
  Does there exist a sequence of polynomials from class $\cl$ which is flat in the Littlewood sense?
\end{que}

The  other, more general question, whether there exists a sequence of trigonometric polynomials
 $$K_j(z) = \frac{1}{\sqrt{n_j}}(a_{0,j} + a_{1,j}z + a_{2,j}z^2 + \cdots + a_{q-1,j}z^{n_j-1}), \eqno(2)$$
   $j=1,2,\cdots, \big|a_{k,j}\big| = 1, 0 \leq k \leq n_j-1,$ such that $\big| K_j(z)\big|\rightarrow 1$  uniformly as $j \rightarrow \infty$, was answered affirmatively by J-P Kahane {\cite {Kahane}}. Furthermore, J\'osef Beck {\cite{Beck}} has shown,
   by applying the random procedure of Kahane, that the sequence $K_j, j=1,2,\cdots$ can be chosen to be flat in the sense of Littlewood with  coefficients of $\sqrt {q_j}K_j, j=1,2,\cdots$ chosen from the solutions of $z^{400} =1$. The class of polynomials of type (2) is denoted by $\mathcal{G}$.\\

    For more details on the ultraflat polynomials of Kahane we refer the reader to \cite{QS}. Let us also mention that very recently, Bomberi and Bourgain \cite{B-Bourgain} constructed an effective sequence of ultraflat polynomials.
    \\

 Littlewood's question is also related to the well-know merit factor problem and Turyn-Golay's conjecture \cite{Golay}, arising from digital communications engineering, which states that the merit factor of any binary sequence is bounded.\\

We remind that the merit factor of a binary sequence ${\boldsymbol{\epsilon}}=(\epsilon_j)_{j=0}^{n-1} \in \{-1,1\}^n$ is given by
$$F_n({\boldsymbol{\epsilon}})=\frac{1}{\Big\|P_n\Big\|_4^4-1},$$
where
$P_n(z)=\frac1{\sqrt{n}}\sum_{j=0}^{n-1}\epsilon_j z^j,~~z \in S^1$. Here $S^1$ denotes the circle group.
For a nice account on the merit factor problem, we refer the reader to \cite{Jedwab}, \cite{Borweinal},\cite{Hoholdt}, and for the connection to ergodic theory and spectral theory of dynamical systems to \cite{Down}.\\

The problem of flat polynomials has nowadays a long history and there is a large literature on the subject. Moreover, this problem  is related to some open problems coming from combinatorics, number theory, digital communication, theory of error codes, complex analysis, spectral theory, ergodic theory and others areas.\\

To the best of the author's knowledge, the only general result known on flatness in class $\mathcal{L}$ is due to Saffari and Smith \cite{Saffari1}.
Unfortunately, the authors in \cite{Saffari} point out that their proof contains a mistake. Therefore, the problem remains open. However,
therein, the authors proved that for the palindromic sequence of polynomials from class $\mathcal{L}$ the $L^4$ conjecture of Erd\"{o}s holds (see below).
We shall strengthen their result by proving that the palindromic polynomials with even degrees from class $\mathcal{L}$ are not $L^\alpha$ flat, for $\alpha \geq 0$. This is done by appealing to the Littlewood's criterion \cite{LittlewoodC}.\\



We further exhibit a subclass of Littlewood polynomials which are not $L^\alpha$-flat, $\alpha>0$ by establishing one-to-one correspondance between the Littlewood polynomials and the Newman-Bourgain polynomials given by
$$Q(z)=\frac1{\sqrt{n}}\sum_{j=0}^{n-1} z^{n_j}.$$

Therefore, our main results can be seen as a general results since it reduces the problem of finding flat polynomials in class $\cl$
to a subclass of $\cl$.
Furthermore, it supports the conjecture mentioned by D. J. Newman in \cite{Newman} which says that all the analytic trigonometric polynomials $P$ with coefficients $\pm 1$ satisfy
\begin{eqnarray*}
\big\|P\big\|_1 <c \big\|P\big\|_2,
\end{eqnarray*}
for some positive constant $c<1$. Obviously, this conjecture implies the two  conjectures of Erd\"{o}s's \cite{ErdosI},\cite[Problem 22]{ErdosII}, \cite{ErdosIII} which states that there is a positive constant $d$ such that for any polynomial $P$ from $\cl$ we have
\begin{enumerate}
  \item $\|P\|_4 \geq (1+d)$, ($L^4$ conjecture of Erd\"{o}s.)
  \item  $\|P\|_{\infty} \geq (1+d).$ (Ultraflat conjecture of  Erd\"{o}s.)\\
\end{enumerate}

However, the author in \cite{Ab} proved that the class of Newman-Bourgain polynomials contain a sequence of $L^\alpha$-flat polynomials,
$0<\alpha<2$. This is accomplished by appealing to Singer's construction of the Sidon sets. We refer to \cite{Ab} for more details.\\


This paper is organized as follows. In section 2, we give a brief exposition of some basic tools and we state our main results. In section 3, we prove our first main result in the case that the frequency of $-1$ is not in $[\frac14,\frac34]$. In section 4, we prove our second main result. Finally, in the appendix, we complete the proof of our first main result.

\begin{thank}
The author wishes to express his thanks to Fran\c cois Parreau for many stimulating conversations on the subject and
his sustained interest and encouragement in this work. He is also thankful to  XiangDong Ye and to the University of Sciences and Technology of China where a part of this paper was written, for the invitation and hospitality.
\end{thank}

\section{Basic definitions and tools}
Let $S^1$ denote the circle group and $dz$ the normalized Lebesgue measure on $S^1$. \\
As customary, for $f \in L^1(S^1,dz)$ we define its $n$th Fourier coefficient by
$$\widehat{f}(n)=\int_{S^1} f(z) z^{-n}dz.$$

A polynomial $f(z)=\sum_{j=0}^{n}a_jz^j$ is palindromic if for any $k$, $$\widehat{f}(k)=\widehat{f}(n-k).$$

The $L^2$-normalized Littlewood polynomials are given by
\begin{eqnarray}\label{PfromL}
P_q(z)=\frac1{\sqrt{q}}\sum_{j=0}^{q-1}\epsilon_j z^j,~~~~~~~~~~~~z \in S^1,
\end{eqnarray}
where for each $j=0,\cdots,q-1$, $\epsilon_j \in \{+1,-1\}$.\\

Notice that each sequence  $\epsilon \in \{+1,-1\}^\N$ can be uniquely associated to a sequence in $\eta \in \{0,1\}^\N$ by
putting
$$\epsilon_j=2\eta_j-1,$$ or
$$\epsilon_j=1-2\eta_j', \textrm{~~~with~~}\eta'=\mathbf{1}-\eta,~~~~\mathbf{1}=(1,\cdots,1).$$

The previous remark will play a crucial role in our proof. Indeed, If $(P_q)$ is a sequence of $L^2$-normalized Littlewood polynomials then
\begin{eqnarray}
P_q(z)&=&\frac1{\sqrt{q}}\sum_{j=0}^{q-1}\epsilon_j z^j \nonumber\\
&=&\frac2{\sqrt{q}}\sum_{j=0}^{q-1}\eta_j z^j-\frac1{\sqrt{q}}\sum_{j=0}^{q-1}z^j, \label{Obs1}\\
&=&\frac1{\sqrt{q}}\sum_{j=0}^{q-1}z^j-\frac2{\sqrt{q}}\sum_{j=0}^{q-1}\eta'_j z^j,~~~~~~~~~~~~~~~~~~~z \in S^1.\label{Obs2}
\end{eqnarray}
We put
$$Q_q(z)=\frac1{\sqrt{q}}\sum_{j=0}^{q-1}\eta_j z^j \textrm{~~and~~}~R_q(z)=\frac1{\sqrt{q}}\sum_{j=0}^{q-1}\eta'_j z^j,
~~~~z \in S^1.$$

For a given sequence of Littlewood polynomials $(P_q)$, we may assume without loss of generalities in the sequel that the following limit exist
$$\lim_{ q\longrightarrow +\infty}\frac{\# \Big\{j~~~:~~\widehat{P_q}(j)=-1\Big\}}{q}=\fr(-1)$$
where $\# E$ denote the cardinality of a set $E$. $\fr(-1)$ is the frequency of $-1$ which is also the frequency of $0$ for the sequence of polynomials $(Q_q)$. Note that the frequency of $1$ are the same for the both sequences of polynomials $(P_q)$ and $(Q_q)$.\\

\paragraph{\textbf{A formula between Littlewood and Newman-Bourgain Polynomials.}} We further assume without loss of generalities that the first and last coefficient of $P_q$ are positive in our definition. This makes  the correspondence $T$ defined below one-one. Let $\mathcal{NB}$ denote  the class of  Newman-Bourgain polynomials, i.e., polynomials $Q$ of the type
$$\frac{1}{\sqrt m}(\eta_0 + \eta_1z + \cdots +\eta_{q-2}z^{q-2} + \eta_{q-1}z^{q-1}),$$
where $\eta_0 = \eta_{q-1} =1$, $\eta_i = 0$ or $1, 1\leq i \leq q-2$, and where $m$ is the number of non-zero terms in $Q$ which is also the number of $i$ with $\eta_i =1$. Note that if $P$ is as in \eqref{PfromL} and if we put
$$\eta_i = \frac{1}{2}(\epsilon_i +1), 0\leq i\leq q -1,$$
then the polynomial
$$\frac{1}{\sqrt m}(\eta_0 + \eta_1z + \cdots + \eta_{q-2}z^{q-2} + \eta_{q-1}z^{q-1})$$

is in class $\mathcal{NB}$, where $m$ is the number of $\eta_i = 1$  which is also the number of $\epsilon_i =1$. Let us define one-one invertible map $ T$ from the class $L$ to the class $\mathcal{NB}$ by
\begin{eqnarray*}
(T(P))(z) &=& T\Big(\frac{1}{\sqrt q}\Big(\epsilon_0 + \epsilon_1z + \cdots + \epsilon_{q-2}z^{q-2} + \epsilon_{q-1}z^{q-1}\Big)\Big)\\
&=& \frac{1}{\sqrt m}\Big(\eta_0 + \eta_1z^1 + \cdots + \eta_{q-2}z^{q-2} + \eta_{q-1}z^{q-1}\Big),
\end{eqnarray*}
where $\eta_i = \frac{1}{2}(\epsilon_i +1), 0\leq i\leq q -1$, and $m$ is the  number of $\eta_i =1$ which is
 also the number of $\epsilon_i =1$.\\

Note
$$T^{-1}\Big(\frac{1}{\sqrt m}\Big(\sum_{i=0}^{q-1}\eta_iz^i\Big)\Big) =
\frac{1}{\sqrt q}\Big(\sum_{i=0}^{q-1}(2\eta_i -1)z^i\Big). $$

Let
$$D(z) = D_q(z) = \frac{1}{\sqrt q}\sum_{i=0}^{q-1} z^i .$$
We thus have that $D(1) = \sqrt q$, while for $z \in S^1 \setminus\{1\}$,
$$D(z) = \frac{1}{\sqrt q}\frac{1-z^q}{1-z} \rightarrow 0$$
as $q \rightarrow \infty$.\\

The formula for polynomials in $\mathcal{L}$ mentioned above is as follows: If $P$ is as in \eqref{PfromL}
then

\begin{eqnarray}\label{Tmaps}
P_q(z) &=& 2\frac{\sqrt m}{\sqrt q}(T(P_q))(z) - D(z)\\
&=&2\frac{1}{\sqrt q} A_q(z) - D_q(z),\nonumber
\end{eqnarray}
where $m$ is the number of terms in $P$ with coefficient +1, $A(z) = \sqrt m~~ T(P)(z)$. The proof follows as soon as we write $T(P)(z)$ and $D(z)$ in the right hand side in full form and collect the coefficient of $z^i, ~~0\leq i \leq q-1$.\\

We further define the one-to-one map $S$ from $\cl$ onto $\cl$ by
$$S(P) =\frac1{\sqrt{q}}\Big(\sum_{j=0}^{q-1}(-\epsilon_j)z^j\Big),$$
i.e. the polynomial obtained from $P$ by changing the signs
of $\epsilon_j$, $j=0,\cdots,q-1$. \\

\noindent{}Note that polynomials in $\mathcal{L}$, $\mathcal{NB}$ and the polynomial $D$ all have $L^2(S^1, dz)$ norm $1$.\\

\paragraph{\textbf{Flat polynomials.}}  For any $\alpha>0$ or $\alpha=+\infty$, the sequence $P_n(z), n=1,2,\cdots$ of analytic trigonometric polynomials of $L^2(S^1,dz)$
norm 1 is said to be $L^\alpha$-flat if  $| P_n(z)|, n=1,2,\cdots$ converges in $L^\alpha$-norm to the constant function $1$. For $\alpha=0$,
we say that $(P_n)$ is $L^\alpha$-flat, if the Mahler measures $M(P_n)$ converge to 1. We recall that the Mahler measure of a function $f \in L^1(S^1,dz)$ is defined by
$$ M(f)=\|f\|_0=\lim_{\beta \longrightarrow 0}\|f\|_{\beta}=\exp\Big(\int_{S^1} \log(|f(t)|) dt\Big).$$

The sequence $P_n(z), n=1,2,\cdots$  is said to be flat in almost everywhere sense (a.e. $(dz)$) if $| P_n(z)|$, $n=1,2,\cdots$ converges almost everywhere to $1$ with respect to $dz$.\\

Note that if $P_n(z), n=1,2,\cdots$  is  a.e. $(dz)$ flat then $S(P_n), n=1,2,\cdots$  is also  a.e. $(dz)$  flat.\\

We further say that a sequence $P_n, n=1,2,\cdots$ of polynomials from the class $\mathcal{L}$ (or $\mathcal{G}$) is flat
in the sense of Littlewood  if there exist constants $0<A<B$  such that for all $z\in S^1$, for all $n \in \N$ (or at least for all large $n$)

$$A \leq \big| P_n(z)\big| \leq B.$$

It is obvious that the flatness properties are invariant under $S$. It is further a nice exercise that the $L^4$ conjecture of Erd\"{o}s and the ultraflat conjecture of Erd\"{o}s holds in the class of Newman-Bourgain polynomials.\\

We are now able to state our main results.
\begin{Th}\label{mainL1} Let $(P_q)$ be a sequence of Littlewood polynomials. Suppose that the frequency of
$-1$ is not in the interval $\Big]\frac14,\frac34\Big[$, then
$(P_q)$ are not $L^\alpha$-flat for any $\alpha \geq 0$.
\end{Th}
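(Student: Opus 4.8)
The plan is to reduce $L^\alpha$-flatness, for \emph{every} $\alpha\ge 0$, to a single necessary condition on the $L^1(S^1,dz)$ norm, and then to contradict that condition using the correspondence \eqref{Tmaps} with the Newman--Bourgain polynomials. First I would record the elementary fact that for any $\alpha\ge 0$, if $(P_q)$ is $L^\alpha$-flat then $\|P_q\|_1\To 1$. For $\alpha\ge 1$ this is immediate from $\big\||P_q|-1\big\|_1\le \big\||P_q|-1\big\|_\alpha$. For $0<\alpha<1$, flatness gives $|P_q|\To 1$ in measure, and since $\|P_q\|_2=1$ for all $q$ the family $\{|P_q|\}$ is bounded in $L^2$, hence uniformly integrable; Vitali's theorem then upgrades convergence in measure to convergence in $L^1$, so $\|P_q\|_1\To 1$. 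For $\alpha=0$, Jensen's inequality gives $M(P_q)=\exp\!\big(\int_{S^1}\log|P_q|\,dz\big)\le \|P_q\|_1\le\|P_q\|_2=1$, so $M(P_q)\To 1$ forces $\|P_q\|_1\To 1$ by squeezing. Thus in all cases it suffices to prove $\|P_q\|_1\not\To 1$.

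Next I would exploit formula \eqref{Tmaps}, namely $P_q=\frac{2}{\sqrt q}A_q-D_q$ with $A_q=\sqrt m\,T(P_q)$ and $m$ the number of coefficients equal to $+1$. The crucial point is that $\|D_q\|_1\To 0$ while $\|D_q\|_2=1$ does not decay: indeed $\|D_q\|_1=\frac{1}{\sqrt q}\int_{S^1}\big|\frac{1-z^q}{1-z}\big|\,dz=O\!\big(\frac{\log q}{\sqrt q}\big)$, the $L^1$ norm of the Dirichlet-type kernel being of order $\log q$. By the triangle inequality $\big|\,\|P_q\|_1-\tfrac{2}{\sqrt q}\|A_q\|_1\,\big|\le \|D_q\|_1\To 0$, and Cauchy--Schwarz on the probability space $(S^1,dz)$ gives $\|A_q\|_1\le\|A_q\|_2=\sqrt m$. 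Hence
$$\|P_q\|_1\le \frac{2}{\sqrt q}\|A_q\|_1+o(1)\le 2\sqrt{\tfrac{m}{q}}+o(1)\To 2\sqrt{1-\fr(-1)},$$
using $m/q\To 1-\fr(-1)$. Running the same computation from the conjugate representation \eqref{Obs2}, where the number of nonzero coefficients is $q-m$, yields in the same way $\|P_q\|_1\le 2\sqrt{\fr(-1)}+o(1)$.

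I would then conclude for $\fr(-1)\notin[\tfrac14,\tfrac34]$: if $\fr(-1)>\tfrac34$ then $2\sqrt{1-\fr(-1)}<1$, and if $\fr(-1)<\tfrac14$ then $2\sqrt{\fr(-1)}<1$; in either case $\limsup_q\|P_q\|_1<1$, contradicting $\|P_q\|_1\To 1$. Hence $(P_q)$ is not $L^\alpha$-flat for any $\alpha\ge 0$ in this range. Equivalently, flatness would force $\fr(-1)\in[\tfrac14,\tfrac34]$.

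The hard part will be the two boundary values $\fr(-1)\in\{\tfrac14,\tfrac34\}$, which the statement also claims. There the limit of the Cauchy--Schwarz bound is exactly $1$, so it only forces $\|T(P_q)\|_1\To 1$ and yields no contradiction; indeed genuinely flat Newman--Bourgain polynomials do exist, via the Sidon-set construction of \cite{Ab}. Overcoming this requires a finer argument using the concentration of $D_q$ near $z=1$: note that $|D_q|^2$ is precisely the Fej\'er kernel, so $|D_q|^2\,dz$ tends to the Dirac mass at $1$, while $A_q(1)=m$ forces $|P_q|$ to be large near $z=1$, which should rule out flatness even at the endpoints. I would carry out this endpoint analysis separately (in the appendix), the crude $L^1$ estimate above handling the strict range $\fr(-1)\notin[\tfrac14,\tfrac34]$.
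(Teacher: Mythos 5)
Your treatment of the strict range $\fr(-1)\notin[\frac14,\frac34]$ is correct and is essentially the paper's own argument in Section 3, in contrapositive form: the paper (Propositions \ref{Key1} and \ref{Key2}) reduces $L^\alpha$-flatness to $L^1$-flatness via Vitali, kills $D_q$ in $L^1$, and applies Cauchy--Schwarz to the $0$--$1$ polynomials $Q_q$ and $R_q$ coming from the two representations \eqref{Obs1} and \eqref{Obs2}, obtaining $\fr(-1)\ge\frac14$ and $1-\fr(-1)\ge\frac14$; your direct bound $\limsup_q\|P_q\|_1\le\min\big(2\sqrt{\fr(-1)},\,2\sqrt{1-\fr(-1)}\big)$ is the same computation repackaged, and your explicit handling of $\alpha=0$ and $0<\alpha<1$ is if anything slightly more complete than the text.

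The genuine gap is at the endpoints $\fr(-1)\in\{\frac14,\frac34\}$, which the statement includes (the excluded interval is open), and your sketched mechanism for them cannot work. The spike of $|P_q|$ near $z=1$ has height of order $\sqrt q$ but lives on an arc of width of order $1/q$: its contribution to $\int\big||P_q|-1\big|^\alpha\,dz$ is $O(q^{\alpha/2-1})$, which tends to $0$ for every $\alpha<2$, so concentration of the Fej\'er kernel $|D_q|^2$ at $z=1$ yields no contradiction with $L^\alpha$-flatness in the range $\alpha\le 2$. Worse, the same spike is present whenever $\fr(-1)\neq\frac12$ --- including frequencies strictly inside $\big]\frac14,\frac34\big[$, where the theorem asserts nothing --- so this heuristic proves too much and cannot be the right endpoint mechanism; it is in fact exactly the $\alpha>2$ mechanism of Theorem \ref{mainL12}, exploited there through the Marcinkiewicz--Zygmund inequalities.

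What the paper actually uses at the endpoints is a different and substantial input. The value $\frac14$ is special because the normalizing factor in \eqref{Tmaps} satisfies $\frac{2\sqrt{m_q}}{\sqrt{q}}\to 1$ precisely when $\frac{m_q}{q}\to\frac14$; hence a.e.\ flatness of $P_q$ (extracted from $L^\alpha$-flatness by Proposition \ref{Key1}, and after applying the sign-flip $S$ in the case $\fr(-1)=\frac14$) transfers to a.e.\ flatness of the Newman--Bourgain polynomials $T(S(P_q))$ of density $\frac{m_n}{q_n}\to\frac14$. This contradicts Corollary \ref{elab-nad2}, which rests on Theorem 5.1 of \cite{Abd-Nad}: the covariance-matrix estimate $C_n/m_n^2\to+\infty$ for a.e.\ flat sequences in $\mathcal{NB}$, combined with the trivial bound $C_n\le 4q_n^2$, forces $m_n/q_n\to 0$. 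This external theorem is the missing idea in your proposal --- deferring the endpoints "to the appendix" with the Fej\'er-kernel remark does not substitute for it, and your own observation that flat Newman--Bourgain sequences do exist (via \cite{Ab}, with vanishing density, consistent with $m_n/q_n\to 0$) shows the endpoint case cannot be settled by $L^1$-norm counting alone.
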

If we restrict our self to the $L^\alpha$ space with $\alpha>2$, then we have the following much stronger result
\begin{Th}\label{mainL12} Let $(P_q)$ be a sequence of Littlewood polynomials. Suppose that the frequency of
$-1$ is not $\frac12$. Then, the polynomials
$(P_q)$ are not $L^\alpha$-flat for any $\alpha>2$. Furthermore,
$$\lim_{q \longrightarrow +\infty}\Big\|P_q\Big\|_\alpha=+\infty.$$
\end{Th}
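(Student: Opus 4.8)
The plan is to prove the stronger quantitative statement $\|P_q\|_\alpha \to +\infty$, from which non-flatness is immediate: if $(P_q)$ were $L^\alpha$-flat we would have $\big\||P_q|-1\big\|_\alpha \to 0$, hence $\|P_q\|_\alpha \to \|1\|_\alpha = 1$, contradicting the divergence. The whole difficulty is therefore to produce a lower bound for $\|P_q\|_\alpha$ that blows up. Since $\|P_q\|_2 = 1$ for every $q$, no comparison of $L^\alpha$ with $L^2$ can help; instead I would exploit a concentration of the modulus $|P_q|$ near the point $z=1$, where the hypothesis $\fr(-1)\neq\frac12$ forces $P_q$ to be abnormally large.

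First I would evaluate $P_q$ at $z=1$. Writing $m$ for the number of coefficients equal to $+1$ and using the decomposition $P_q = 2Q_q - D_q$ together with $D_q(1)=\sqrt q$ and $Q_q(1)=m/\sqrt q$, one obtains $P_q(1) = (2m-q)/\sqrt q = \sqrt q\,(2\tfrac mq - 1)$. Since $m/q \to 1-\fr(-1)$ by our standing assumption, $|P_q(1)| = \sqrt q\,|1-2\fr(-1)| + o(\sqrt q)$, so with $c_0 := \tfrac12|1-2\fr(-1)| > 0$ we have $|P_q(1)| \ge c_0\sqrt q$ for all large $q$. This is the only place where the hypothesis $\fr(-1)\neq\frac12$ enters.

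Next I would propagate this single large value to an arc of length of order $1/q$. Set $g(\phi)=P_q(e^{i\phi})$; this is a trigonometric polynomial of degree $q-1$ with $\|g\|_\infty \le \frac1{\sqrt q}\sum_{j=0}^{q-1}|\epsilon_j| = \sqrt q$. By Bernstein's inequality $\|g'\|_\infty \le (q-1)\|g\|_\infty \le q^{3/2}$, so for $|\phi|\le\delta_q := c_0/(2q)$ the mean value estimate gives $|g(\phi)| \ge |g(0)| - |\phi|\,\|g'\|_\infty \ge c_0\sqrt q - \tfrac{c_0}{2q}q^{3/2} = \tfrac{c_0}{2}\sqrt q$. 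Integrating only over this arc,
\[
\|P_q\|_\alpha^\alpha = \frac1{2\pi}\int_0^{2\pi}|g(\phi)|^\alpha\,d\phi \ge \frac{1}{2\pi}\,(2\delta_q)\Big(\frac{c_0}{2}\sqrt q\Big)^\alpha \ge C\, q^{\frac\alpha2 - 1},
\]
with $C=C(c_0,\alpha)>0$. Hence $\|P_q\|_\alpha \ge C^{1/\alpha}\, q^{\frac12 - \frac1\alpha}$, which tends to $+\infty$ precisely because $\alpha>2$, proving both assertions.

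The only genuinely non-routine step is the propagation of the large value at $z=1$ across an arc, i.e. the Bernstein bound on $g'$ controlling how fast $|g|$ can decay away from its peak; everything else is bookkeeping. It is worth noting that the exponent $\frac12-\frac1\alpha$ is positive if and only if $\alpha>2$, so the argument is sharp for this method and is consistent with $\|P_q\|_2\equiv 1$: the threshold $\alpha=2$ in the statement is exactly the value at which the concentration near $z=1$ ceases to overwhelm the $L^2$ normalization.
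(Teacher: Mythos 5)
Your proof is correct, and it reaches the conclusion by a genuinely different route than the paper. Both arguments pivot on the same observation --- $P_q(1)=(2m-q)/\sqrt q$, so $|P_q(1)|^2=q\big(1-2\tfrac{n_q}{q}\big)^2\sim q\,(1-2\fr(-1))^2$ grows like $q$ --- but they propagate this single large value differently. The paper applies the Marcinkiewicz--Zygmund inequality with exponent $\beta=\alpha/2>1$ to the degree-$(q-1)$ trigonometric polynomial $|P_q|^2-1$, retaining only the node $z=1$ among the sampling points, to obtain $\big\||P_q|^2-1\big\|_\beta^\beta\geq \frac{A_\beta}{q}\,\big||P_q(1)|^2-1\big|^\beta$; the triangle inequality $\big\||P_q|^2-1\big\|_\beta\leq\|P_q\|_\alpha^2+1$ then forces $\|P_q\|_\alpha\to\infty$, with the hypothesis $\alpha>2$ entering through the requirement $\beta>1$ for the Marcinkiewicz--Zygmund constant. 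You instead use the trivial bound $\|P_q\|_\infty\leq\sqrt q$ together with Bernstein's inequality to show $|P_q|\geq\frac{c_0}{2}\sqrt q$ on an arc of length $\sim c_0/q$ about $z=1$, and integrate over that arc; this is entirely elementary (Bernstein could even be bypassed by bounding $\|P_q'\|_\infty\leq q^{3/2}$ termwise from the coefficients), it isolates exactly where $\alpha>2$ is needed (positivity of the exponent $\tfrac12-\tfrac1\alpha$), and it yields the explicit rate $\|P_q\|_\alpha\gtrsim q^{\frac12-\frac1\alpha}$ --- which is in fact the same rate the paper's inequality unwinds to, since $(\|P_q\|_\alpha^2+1)^\beta\gtrsim q^{\beta-1}$ also gives $\|P_q\|_\alpha\gtrsim q^{\frac12-\frac1\alpha}$. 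What the paper's route buys is brevity and direct control of the natural flatness quantity $\big\||P_q|^2-1\big\|_{\alpha/2}$; what yours buys is self-containedness and transparency about why $\alpha=2$ is the exact threshold consistent with $\|P_q\|_2\equiv 1$. Your opening reduction (flatness would force $\|P_q\|_\alpha\to 1$, contradicting divergence) correctly covers both assertions of the theorem.
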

We state our second main result as follows.
\begin{Th}\label{mainL2} Let $(P_q)$ be a sequence of Littlewood polynomials. Suppose that each $P_q$ is
palindromic with even degree. Then $(P_q)$ are not $L^\alpha$-flat for any $\alpha \geq 0$.
\end{Th}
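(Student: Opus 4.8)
The plan is to exploit the palindromic even-degree structure to write $P_q$ as a unimodular factor times a \emph{real} cosine polynomial, to collapse flatness in every $L^\alpha$ ($\alpha\ge 0$) to $L^2$-flatness of that real polynomial, and finally to rule the latter out.

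\emph{Step 1 (reduction to a real, flat-spectrum polynomial).} Write $n=q-1$ for the (even) degree and $N=n/2=(q-1)/2\in\N$. Palindromicity means $\epsilon_k=\epsilon_{n-k}$, so factoring out $z^N$ (with $z=e^{i\theta}$) gives
\begin{equation*}
P_q(e^{i\theta})=e^{iN\theta}\,g_q(\theta),\qquad g_q(\theta)=\frac{\epsilon_N}{\sqrt q}+\frac{2}{\sqrt q}\sum_{k=1}^{N}\epsilon_{N+k}\cos(k\theta).
\end{equation*}
Thus $g_q$ is real-valued, $|P_q|=|g_q|$, $\|g_q\|_2=1$, and every Fourier coefficient of $g_q$ has modulus $\tfrac1{\sqrt q}$ on $\{-N,\dots,N\}$ and vanishes outside; in particular $\widehat{g_q}(0)=\pm\tfrac1{\sqrt q}\to 0$. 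So $g_q$ is a real, mean-vanishing, \emph{equimodular} (flat-spectrum) trigonometric polynomial of degree $N$.

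\emph{Step 2 (collapsing all $\alpha\ge 0$ to $L^2$).} I would first show that $L^\alpha$-flatness, for \emph{any} $\alpha\ge 0$, forces $\|P_q\|_1\to 1$. For $\alpha\ge 1$ this is immediate; for $0<\alpha<1$, $L^\alpha$-flatness gives $|P_q|\to 1$ in measure, and since $\{|P_q|\}$ is bounded in $L^2$ it is uniformly integrable, whence $|P_q|\to 1$ in $L^1$; for $\alpha=0$ the chain $M(P_q)\le\|P_q\|_1\le\|P_q\|_2=1$ together with $M(P_q)\to 1$ squeezes $\|P_q\|_1\to 1$. Since $\|P_q\|_2=1$,
\begin{equation*}
\big\||P_q|-1\big\|_2^2=\|P_q\|_2^2-2\|P_q\|_1+1=2\big(1-\|P_q\|_1\big),
\end{equation*}
so $\|P_q\|_1\to 1$ is \emph{equivalent} to $|P_q|\to 1$ in $L^2$, i.e. to $g_q^2\to 1$ in $L^1$. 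This is the form of Littlewood's criterion \cite{LittlewoodC} I would invoke: every $L^\alpha$-flatness notion, $\alpha\ge 0$, implies $L^2$-flatness of $g_q$. It therefore suffices to prove that $(g_q)$ is \emph{not} $L^2$-flat.

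\emph{Step 3 (the obstruction; main difficulty).} Suppose for contradiction that $|g_q|\to 1$ in $L^2$. Then $g_q$ is $L^2$-close to the sign function $s_q=\mathrm{sgn}(g_q)\in\{-1,+1\}$, and since $\widehat{g_q}(0)\to 0$ the sets $\{g_q>0\}$ and $\{g_q<0\}$ each have measure tending to $\tfrac12$. On the other hand the flat spectrum forces rapid oscillation:
\begin{equation*}
\|g_q'\|_2^2=\sum_{|k|\le N}k^2\,|\widehat{g_q}(k)|^2=\frac1q\sum_{|k|\le N}k^2\sim\frac{q^2}{12},
\end{equation*}
so $g_q$ has $\Theta(q)$ sign changes, a constant fraction of the maximum $2N$ allowed in degree $N$. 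Consequently $s_q$ has $\Theta(q)$ jumps and carries a non-negligible share of its Fourier energy beyond frequency $N$, which is incompatible with $s_q$ being $L^2$-approximated by the band-limited $g_q$ (for which $\sum_{|k|>N}|\widehat{g_q}(k)|^2=0$). Quantitatively, this incompatibility is exactly what the Saffari--Smith $L^4$ lower bound $\|P_q\|_4^4\ge 1+\delta$ for palindromic sequences \cite{Saffari1} should deliver, once one controls the narrow dips of $|g_q|$ at its $\Theta(q)$ zeros. The hard part is precisely this last control: $L^2$-flatness alone does not bound the $L^4$ scale (for general polynomials isolated tall, thin peaks reconcile the two), so the argument must use the real, equimodular structure of $g_q$ in an essential way — verifying that the many sign changes cannot conspire (through coherent phase cancellation in the tail of $s_q$) to defeat the energy count is the step I expect to be the main obstacle, and it is where Littlewood's criterion does the real work.
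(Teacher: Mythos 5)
Your Steps 1 and 2 are correct and essentially match the paper's setup: Step 1 is exactly the paper's factorization $P_n(z)=z^{n/2}L_n(z)-\epsilon_{n/2}z^{n/2}$ with $L_n(\theta)=\sum_{k=0}^{n/2}a_{n/2-k}\cos(k\theta)$, $a_k=2\epsilon_k$, and your Step 2 reduction of every flatness notion $\alpha\ge 0$ to $\|P_q\|_1\to 1$ (via Vitali plus the identity $\bigl\||P_q|-1\bigr\|_2^2=2\bigl(1-\|P_q\|_1\bigr)$) is valid, and indeed more explicit than the paper on this point. The genuine gap is in Step 3, and you flag it yourself. Neither of your two proposed mechanisms closes the argument: the deduction ``$\|g_q'\|_2^2\sim q^2/12$, so $g_q$ has $\Theta(q)$ sign changes'' is unjustified --- large derivative energy can live in high-frequency ripples of $g_q$ that never cross zero, so no sign-change count follows; and the Saffari--Smith $L^4$ lower bound cannot substitute, because (as you concede) $\|P_q\|_4^4\ge 1+\delta$ is perfectly compatible with $\bigl\||P_q|-1\bigr\|_2\to 0$, thin tall peaks reconciling the two. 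So as written, nothing in Step 3 actually contradicts $L^2$-flatness of $g_q$, and the proof is incomplete.

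What you are missing is that Littlewood's criterion --- which you cite in Step 2 but never actually apply (your Step 2 is just Vitali/Cauchy--Schwarz, not the criterion) --- is a ready-made black box whose hypothesis is precisely the equimodularity you established in Step 1. The criterion (\cite{LittlewoodC}, quoted in the paper) says: if $f_n(t)=\sum_{m=1}^{n}a_m\cos(mt+\phi_m)$ satisfies $\sum_{m=1}^{n}a_m^2\le \frac{K}{n^2}\sum_{m=1}^{n}m^2a_m^2$ for an absolute constant $K$, then for each $\alpha>0$, $\alpha\neq 2$, there is $A=A(K,\alpha)>0$ with $\|f_n\|_\alpha\le(1-A)\|f_n\|_2$ for $\alpha<2$ and $\|f_n\|_\alpha\ge(1+A)\|f_n\|_2$ for $\alpha>2$. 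For your $g_q$ the nonconstant cosine coefficients all have equal modulus, so, since $\sum_{m=1}^{N}m^2\ge N^3/3$, the hypothesis holds with the universal constant $K=3$ (the central coefficient $\pm 1/\sqrt q$ is harmless since it tends to $0$). Taking $\alpha=1$ gives $\|g_q\|_1\le(1-A)\|g_q\|_2$, whence by your own Step 2 identity $\bigl\||g_q|-1\bigr\|_2^2=2\bigl(1-\|g_q\|_1\bigr)\ge 2A>0$ uniformly in $q$, contradicting $L^2$-flatness and hence, through Step 2, $L^\alpha$-flatness for every $\alpha\ge 0$ (the case $\alpha=0$ also follows from $M(P_q)\le\|P_q\|_1$). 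This single application of the criterion is the entirety of the paper's argument; the ``main obstacle'' you describe --- controlling the dips of $|g_q|$ near its zeros --- is internal to Littlewood's proof of the criterion and does not need to be re-derived.
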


\section{Proof of Theorem \ref{mainL1} when the frequency of $-1$ is not in $\Big[\frac14,\frac34\Big].$ }
We start by stating a criterion on the connection between the $L^1$-flatness and $L^\alpha$-flatness, for $\alpha>0.$
\begin{Prop}\label{Key1}Let $\alpha>0$ and $(P_q(z))_{q \geq 0}$ be a sequence of $L^2$-normalized sequence of $L^\alpha$-flat polynomials. Then, there exist a subsequences $(P_{q_n}(z))$ which is almost everywhere flat and $L^1$-flat. Conversely,
Assume that $(P_q(z))_{q \geq 0}$ is $L^1$-flat then there exist a subsequences $(P_{q_n}(z))$ which is almost everywhere flat and $L^\alpha$-flat, for $0<\alpha<2$.
\end{Prop}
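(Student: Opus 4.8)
The plan is to reduce the entire statement to three standard measure-theoretic facts on the probability space $(S^1,dz)$: that convergence in $L^\beta$ for any $\beta>0$ forces convergence in measure and hence yields an almost everywhere convergent subsequence; that the $L^2$-normalization $\|P_q\|_2=1$ makes the families $\{|P_q|\}$ and $\{\,||P_q|-1|\,\}$ bounded in $L^2$; and that Vitali's convergence theorem upgrades convergence in measure to $L^\beta$-convergence as soon as the relevant $\beta$-th powers are uniformly integrable. The required uniform integrability will come for free from the $L^2$ bound via the truncation estimate $\int_{\{|f|>M\}}|f|\,dz\le M^{-1}\|f\|_2^2$. I would treat the two implications separately.

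For the direct implication, suppose $(P_q)$ is $L^\alpha$-flat with $\alpha>0$, so that $\int_{S^1}\big||P_q|-1\big|^\alpha\,dz\to0$ (this is the definition when $0<\alpha<1$, and follows from the definition after raising to the power $\alpha$ when $\alpha\ge1$). Hence $|P_q|\to1$ in measure, and I would extract a subsequence $(P_{q_n})$ with $|P_{q_n}|\to1$ almost everywhere, which gives the a.e.\ flatness. For $L^1$-flatness I would invoke that $\{|P_q|\}$ is uniformly integrable, since $\int_{\{|P_q|>M\}}|P_q|\,dz\le M^{-1}\|P_q\|_2^2=M^{-1}$ uniformly in $q$; combined with convergence in measure, Vitali's theorem yields $\big\||P_q|-1\big\|_1\to0$, so the extracted subsequence is simultaneously a.e.\ flat and $L^1$-flat.

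For the converse, suppose $(P_q)$ is $L^1$-flat. Convergence in $L^1$ again gives convergence in measure, from which I extract a subsequence with $|P_{q_n}|\to1$ almost everywhere. When $0<\alpha\le1$, monotonicity of the $L^p$-norms on the probability space $(S^1,dz)$ gives $\big\||P_q|-1\big\|_\alpha\le\big\||P_q|-1\big\|_1\to0$, so $L^\alpha$-flatness is immediate for the full sequence. The substantive range is $1<\alpha<2$: here I set $g_n=\big||P_{q_n}|-1\big|$, note $g_n\to0$ a.e.\ and $\sup_n\|g_n\|_2\le\sup_n(\||P_{q_n}|\|_2+1)=2$, so $(g_n)$ is bounded in $L^2$. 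Since $\alpha<2$ we have $2/\alpha>1$, hence $(g_n^\alpha)$ is bounded in $L^{2/\alpha}$ and therefore uniformly integrable; as $g_n^\alpha\to0$ a.e., Vitali's theorem gives $\int_{S^1}g_n^\alpha\,dz\to0$, i.e.\ $\big\||P_{q_n}|-1\big\|_\alpha\to0$.

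The only genuine content is the Vitali step, and the whole purpose of the $L^2$-normalization hypothesis is precisely to furnish the uniform integrability that drives it; absent an a priori integrability bound the two notions of flatness would not be comparable. I expect the main (indeed essentially the only) obstacle to be bookkeeping the exponent threshold: the ceiling $\alpha<2$ in the converse is forced because an $L^2$ bound yields uniform integrability of powers strictly below $2$ but says nothing at $\alpha=2$, so the argument genuinely cannot be pushed to or beyond the normalization exponent. I would also keep in mind that the almost-everywhere assertions require passing to a subsequence, whereas the norm convergences established above in fact hold for the full sequence.
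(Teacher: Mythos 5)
Your proof is correct and follows essentially the same route as the paper's: extract an a.e.\ convergent subsequence from convergence in measure, then apply Vitali's convergence theorem, with the uniform integrability supplied by the $L^2$-normalization (boundedness in $L^{2/\alpha}$ with $2/\alpha>1$, exactly the criterion the paper records after stating Vitali's theorem). Your write-up merely makes explicit the truncation estimates and the exponent bookkeeping that the paper's two-line proof leaves implicit.
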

For the proof of Proposition \ref{Key1} we need the following tool that is quite useful for proving convergence
 in $L^p$ when the almost everywhere convergence holds without domination.
\begin{Th}[Vitali's convergence theorem ] \label{Vitali}Let $(X,\mathcal{B},\mu)$ be a probability space, $p$ a positive number and $\{f_n\}$ a sequence in $L^p(X)$ which converges in probability to $f$. Then, the following are equivalent:
\begin{enumerate}[(i)]
\item $(|f_n|^p)_{n \geq 0}$ is uniformly integrable;
\item $\ds
\Big|\Big|f_n-f\Big|\Big|_p \tend{n}{+\infty}0.$
\item $\ds \int_X |f_n|^p d\mu \tend{n}{+\infty} \int_X |f|^p d\mu.$
\end{enumerate}
\end{Th}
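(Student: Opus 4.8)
The plan is to prove the cyclic chain of implications $(i)\Rightarrow(ii)\Rightarrow(iii)\Rightarrow(i)$. Throughout I will use that on a probability space convergence in probability coincides with convergence in measure, together with the subsequence principle: a sequence of reals converges to $a$ if and only if every subsequence admits a further subsequence converging to $a$, while every sequence converging in measure admits a subsequence converging $\mu$-a.e. This principle is the device that lets me pass from convergence in probability to pointwise statements and invoke Fatou's lemma or bounded convergence where needed. I also record the elementary inequality $|a+b|^p\le c_p(|a|^p+|b|^p)$ with $c_p=\max(1,2^{p-1})$, valid for all $p>0$, which lets me treat the ranges $0<p<1$ and $p\ge 1$ uniformly.

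For $(i)\Rightarrow(ii)$, I would first note that a uniformly integrable family is bounded in $L^1$, so $\sup_n\int_X|f_n|^p\,d\mu<\infty$; extracting a subsequence $f_{n_k}\to f$ a.e. and applying Fatou's lemma then gives $\int_X|f|^p\,d\mu<\infty$, i.e.\ $f\in L^p$. From $|f_n-f|^p\le c_p(|f_n|^p+|f|^p)$ the family $(|f_n-f|^p)_n$ is uniformly integrable, being dominated by the sum of the uniformly integrable family $(|f_n|^p)$ and the single integrable function $|f|^p$; and $f_n\to f$ in probability forces $|f_n-f|^p\to 0$ in probability. The conclusion then rests on the key lemma: if $g_n\ge 0$, $g_n\to 0$ in probability and $(g_n)$ is uniformly integrable, then $\int_X g_n\,d\mu\to 0$. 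I would prove this by splitting, for thresholds $0<\eta<M$, $\int_X g_n\,d\mu\le \eta+M\,\mu(\{g_n>\eta\})+\sup_m\int_{\{g_m>M\}}g_m\,d\mu$, where the first term uses $\mu(X)=1$, the second uses $\int_{\{\eta<g_n\le M\}}g_n\le M\mu(\{g_n>\eta\})$, and the third uses uniform integrability; choosing $M$ large, then $\eta$ small, then $n$ large (so that $\mu(\{g_n>\eta\})\to 0$) makes the right-hand side arbitrarily small, giving $\|f_n-f\|_p\to 0$.

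The implication $(ii)\Rightarrow(iii)$ is immediate. For $p\ge 1$ the reverse triangle inequality gives $\big|\,\|f_n\|_p-\|f\|_p\,\big|\le\|f_n-f\|_p\to 0$, whence $\int_X|f_n|^p\,d\mu\to\int_X|f|^p\,d\mu$; for $0<p<1$ the subadditivity $\big||a|^p-|b|^p\big|\le|a-b|^p$ yields $\big|\int_X|f_n|^p\,d\mu-\int_X|f|^p\,d\mu\big|\le\int_X|f_n-f|^p\,d\mu\to 0$ directly.

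I expect $(iii)\Rightarrow(i)$ to be the crux, since here uniform integrability must be recovered from mere convergence of the integrals. Writing $g_n=|f_n|^p$ and $g=|f|^p$, continuity of $x\mapsto x^p$ gives $g_n\to g$ in probability, while (iii) reads $\int_X g_n\,d\mu\to\int_X g\,d\mu<\infty$. The decisive step is a truncation argument: the functions $\min(g_n,M)$ are bounded by $M$ and converge to $\min(g,M)$ in probability, so bounded convergence (applied along a.e.-convergent subsequences via the subsequence principle) gives $\int_X\min(g_n,M)\,d\mu\to\int_X\min(g,M)\,d\mu$; subtracting from $\int_X g_n\,d\mu\to\int_X g\,d\mu$ yields $\int_X(g_n-M)^+\,d\mu\to\int_X(g-M)^+\,d\mu$. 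Combining this with the pointwise bound $g_n\le 2(g_n-M)^+$ on $\{g_n>2M\}$, I obtain $\limsup_n\int_{\{g_n>2M\}}g_n\,d\mu\le 2\int_X(g-M)^+\,d\mu$, and since $g\in L^1$ the right-hand side tends to $0$ as $M\to\infty$ by dominated convergence. Absorbing the finitely many initial indices (each $g_n\in L^1$, so its tail integrals vanish) then establishes the uniform integrability of $(|f_n|^p)$, closing the cycle.
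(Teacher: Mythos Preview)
Your proof is correct and complete: the cyclic chain $(i)\Rightarrow(ii)\Rightarrow(iii)\Rightarrow(i)$ is established cleanly, the subsequence principle is used appropriately to pass from convergence in probability to a.e.\ convergence where Fatou or bounded convergence is needed, and the truncation argument for $(iii)\Rightarrow(i)$ is carried out carefully (the pointwise inequality $g_n\le 2(g_n-M)^+$ on $\{g_n>2M\}$ is indeed valid since $g_n>2M$ gives $g_n-M>g_n/2$).

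However, there is nothing to compare against: the paper does not prove this theorem. It is quoted as a classical tool (Vitali's convergence theorem), stated without proof, and then immediately applied in the proof of Proposition~\ref{Key1}. So your proposal goes well beyond what the paper does, supplying a self-contained argument where the paper simply cites the result.
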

We remind that the sequence $(f_n)_{n \in \N}$ of integrable functions is said to  be  uniformly integrable  if and only if
$$\int_{\{|f_n|>M\}}\big|f_n\big|(x) d\mu(x) \tend{M}{+\infty}0,$$
uniformly in $n \in \N$. We further have that the condition
\begin{eqnarray*}\label{uniform}
\sup_{ n \in \N}\bigg(\int\big(\big|f_n\big|^{1+\varepsilon}\big)\bigg) < +\infty,
\end{eqnarray*}
for some $\varepsilon$ positive, implies that $\{f_n\}$ are uniformly integrable.\\
\begin{proof}[\textbf{Proof of Proposition \ref{Key1}}]Let $\alpha>0$ and assume that $(P_q(z))_{q \geq 0}$ is $L^\alpha$-flat. Then along a subsequence $(q_n)$ we have that $(|P_{q_n}(z)|)_{n \geq 0}$ converge almost everywhere to $1$. Whence
$(P_{q_n}(z))_{n \geq 0}$ is $L^1$-flat by Vitali's convergence theorem. In the opposite direction, assume that $(P_q(z))_{q \geq 0}$ is $L^1$-flat, then along a subsequence $(|P_{q_n}(z)|)_{n \geq 0}$ converge almost everywhere to $1$. Again by Vitali's convergence theorem, $(P_{q_n}(z))_{n \geq 0}$ is $L^\alpha$-flat for $0 <\alpha<$2.
\end{proof}
In the following we provide a necessary condition for $L^1$-flatness of a sequence of Littlewood polynomials.
\begin{Prop}\label{Key2}Let $(P_q(z))_{q \geq 0}$ be a sequence of $L^2$-normalized  Littlewood polynomials. Suppose that
 $(P_q(z))_{q \geq 0}$ are  $L^1$-flat polynomials, then the frequency of $-1$ is in the interval $\Big[\frac14,\frac34\Big]$.
\end{Prop}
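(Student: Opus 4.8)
The plan is to exploit the two decompositions of a Littlewood polynomial recorded in \eqref{Obs1} and \eqref{Obs2}, namely $P_q = 2Q_q - D_q$ and $P_q = D_q - 2R_q$, together with the single analytic input that the normalized Dirichlet kernel $D_q$ tends to $0$ in $L^1(S^1,dz)$. Recall that $\|Q_q\|_2^2 = m/q \to 1 - \fr(-1)$ and $\|R_q\|_2^2 = (q-m)/q \to \fr(-1)$, where $m$ is the number of coefficients equal to $+1$. Since $(P_q)$ is $L^1$-flat we have $\||P_q|-1\|_1 \to 0$, hence $\|P_q\|_1 \to 1$. The strategy is to transfer this $L^1$ mass onto $Q_q$ and onto $R_q$, and then to play the resulting $L^1$ information against the known $L^2$ norms via Cauchy--Schwarz.

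First I would establish that $\|D_q\|_1 \to 0$. As noted in the preliminaries, $D_q(z) \to 0$ for every $z \in S^1\setminus\{1\}$, so $D_q \to 0$ almost everywhere and hence in probability. Moreover $\|D_q\|_2 = 1$ for all $q$, so $\sup_q \int_{S^1}|D_q|^2\,dz < \infty$; this uniform $L^2$ bound makes the family $(|D_q|)$ uniformly integrable, and Vitali's convergence theorem (Theorem \ref{Vitali} with $p=1$) yields $\|D_q\|_1 \to 0$. Next, from $2Q_q = P_q + D_q$ the reverse triangle inequality gives $\bigl||2Q_q(z)| - |P_q(z)|\bigr| \le |D_q(z)|$ pointwise, whence
\begin{equation*}
\bigl|\,\|2Q_q\|_1 - \|P_q\|_1\,\bigr| \le \|D_q\|_1 \to 0 .
\end{equation*}
Combined with $\|P_q\|_1 \to 1$ this forces $\|Q_q\|_1 \to \tfrac12$. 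The identity $2R_q = D_q - P_q$ gives, in exactly the same way, $\|R_q\|_1 \to \tfrac12$.

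Finally, since $dz$ is a probability measure, Cauchy--Schwarz (equivalently Jensen) gives $\|Q_q\|_1 \le \|Q_q\|_2$ and $\|R_q\|_1 \le \|R_q\|_2$ for every $q$. Passing to the limit,
\begin{equation*}
\tfrac12 = \lim_q \|Q_q\|_1 \le \lim_q \|Q_q\|_2 = \sqrt{1-\fr(-1)}, \qquad \tfrac12 = \lim_q \|R_q\|_1 \le \lim_q \|R_q\|_2 = \sqrt{\fr(-1)},
\end{equation*}
so that $\fr(-1) \le \tfrac34$ and $\fr(-1) \ge \tfrac14$ respectively, i.e. $\fr(-1) \in [\tfrac14,\tfrac34]$. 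I expect the only genuine obstacle to be the $L^1$-decay of the Dirichlet kernel: everything else is bookkeeping, but $\|D_q\|_1 \to 0$ is what allows the normalizing term $D_q$ to be discarded despite its $L^2$ norm being the non-negligible constant $1$. (One could alternatively invoke the Lebesgue-constant estimate $\|D_q\|_1 = O(q^{-1/2}\log q)$ in place of Vitali's theorem.) Note also that the endpoints $\fr(-1) \in \{\tfrac14,\tfrac34\}$ are retained here; excluding them, as in Theorem \ref{mainL1}, is a separate matter handled in the appendix.
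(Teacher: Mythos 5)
Your proof is correct and takes essentially the same route as the paper's: discard the Dirichlet term $D_q$ in $L^1$ via almost everywhere convergence plus Vitali's theorem, transfer the flatness hypothesis to $\|R_q\|_1 \to \tfrac12$ (resp. $\|Q_q\|_1 \to \tfrac12$), and conclude by Cauchy--Schwarz against $\|R_q\|_2 \to \sqrt{\fr(-1)}$ and $\|Q_q\|_2 \to \sqrt{1-\fr(-1)}$. The only cosmetic difference is that you run the two symmetric halves explicitly via the reverse triangle inequality, whereas the paper carries out the $R_q$ half and then says to repeat the argument with $Q_q$ in place of $R_q$.
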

For the proof of Proposition we need the following simple lemma.
\begin{lem}\label{simple}The sequence of polynomials $\ds \Big(\frac1{\sqrt{q}}\sum_{j=0}^{q-1 }z^j\Big)_{q \geq 0}$ is $L^\alpha$-uniformly integrable, for $\alpha \in ]0,2[$.
\end{lem}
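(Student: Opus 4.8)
The plan is to prove that the normalized Dirichlet-type kernels $D_q(z) = \frac{1}{\sqrt q}\sum_{j=0}^{q-1} z^j$ form an $L^\alpha$-uniformly integrable family for $\alpha \in \,]0,2[$, which by the remark preceding the lemma reduces to exhibiting a single exponent $\beta > \alpha$ for which $\sup_q \|D_q\|_\beta^\beta < +\infty$. Concretely I would fix any $\beta$ with $\alpha < \beta < 2$ and estimate $\int_{S^1} |D_q(z)|^\beta\,dz$ uniformly in $q$.

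First I would use the explicit closed form already recorded in the excerpt: for $z = e^{2\pi i t}$ with $z \neq 1$ one has $D_q(z) = \frac{1}{\sqrt q}\,\frac{1-z^q}{1-z}$, so that $|D_q(z)| = \frac{1}{\sqrt q}\,\frac{|\sin(\pi q t)|}{|\sin(\pi t)|} \leq \frac{1}{\sqrt q}\,\frac{1}{|\sin(\pi t)|}$, while on the other hand the trivial bound $|D_q(z)| \leq \sqrt q$ holds everywhere (from the triangle inequality on the $q$ unit-modulus terms). The strategy is to split the integral $\int_0^1 |D_q|^\beta\,dt$ at the scale $t \asymp 1/q$: on the small set $|t| \lesssim 1/q$ (and symmetrically near $t=1$) I use the uniform bound $|D_q| \leq \sqrt q$, contributing at most $(\sqrt q)^\beta \cdot \frac{C}{q} = C\, q^{\beta/2 - 1}$, which stays bounded precisely because $\beta < 2$; on the complementary set I use $|D_q| \leq \frac{1}{\sqrt q \,|\sin \pi t|} \leq \frac{C}{\sqrt q\, t}$, giving a contribution controlled by $q^{-\beta/2}\int_{1/q}^{1/2} t^{-\beta}\,dt$, and since $\beta < 2$ (in fact the borderline is $\beta \ge 1$) I would track the power of $q$ produced by this tail integral and confirm it is again $O(q^{\beta/2-1}) = O(1)$.

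The one point that needs a little care — and is the main obstacle — is the case $\beta \geq 1$, where $\int t^{-\beta}$ diverges at the origin and the tail integral $\int_{1/q}^{1/2} t^{-\beta}\,dt$ scales like $q^{\beta-1}$; combined with the prefactor $q^{-\beta/2}$ this gives $q^{\beta/2-1}$, which is bounded exactly when $\beta \le 2$, so the two pieces match up and stay uniformly bounded throughout $]0,2[$. (For $\beta < 1$ the tail integral converges and the prefactor $q^{-\beta/2}$ already forces the tail to zero, so that regime is even easier.) Thus in every subcase $\sup_q \int_{S^1}|D_q|^\beta\,dz < +\infty$, and I would assemble these estimates into the single uniform bound.

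Finally I would invoke the criterion quoted in the excerpt: a uniform bound $\sup_q \|D_q\|_\beta^\beta < +\infty$ with $\beta = 1+\varepsilon > \alpha$ (choosing $\varepsilon = \beta - 1 > 0$ when $\alpha \geq 1$, and otherwise any fixed $\beta \in \,]\alpha,2[$) implies that $\bigl(|D_q|^\alpha\bigr)_q$ is uniformly integrable, which is the assertion of the lemma. I expect the whole argument to be short; the only thing to state cleanly is the dyadic split at $t \asymp 1/q$ and the bookkeeping of powers of $q$ that shows the exponent $\beta/2 - 1$ is nonpositive on the whole range.
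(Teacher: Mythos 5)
Your proposal is correct, but it takes a genuinely different and more computational route than the paper. The paper's own proof of Lemma~\ref{simple} never touches the closed form of the Dirichlet kernel: it estimates $\int_{\{|D_q|^\alpha>M\}}|D_q|^\alpha\,dz$ directly by H\"older's inequality with exponents $\beta=2/\alpha>1$ and $\beta'$, so that the factor $\|D_q\|_2^{\alpha}=1$ (Parseval) drops out, and then controls the measure of the exceptional set via Markov's inequality together with $\|D_q\|_1\le\|D_q\|_2=1$ (Cauchy--Schwarz); letting $M\to+\infty$ gives the uniform integrability in a few lines. Both arguments ultimately rest on the same principle --- boundedness of $\bigl(|D_q|^\alpha\bigr)_q$ in some $L^p$ with $p>1$ --- but where you establish boundedness in $L^{\beta/\alpha}$ for $\alpha<\beta<2$ by the pointwise bounds $|D_q|\le\sqrt q$ and $|D_q(e^{2\pi it})|\le C/(\sqrt q\,t)$ with a split at $t\asymp 1/q$, the paper takes $p=2/\alpha$, where the relevant norm is identically $1$. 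For the same reason your kernel estimates are strictly speaking superfluous: you could take $\beta=2$ outright, since $\sup_q\bigl\||D_q|^\alpha\bigr\|_{2/\alpha}=\sup_q\|D_q\|_2^\alpha=1$ and $2/\alpha>1$. What your computation buys instead is a stronger quantitative conclusion, namely $\|D_q\|_\beta^\beta=O\bigl(q^{\beta/2-1}\bigr)\to0$ for every $\beta<2$, which in particular yields $\|D_q\|_1\to0$ directly --- a fact the paper obtains later, in the proof of Proposition~\ref{Key2}, from a.e.\ convergence plus Vitali's theorem. One bookkeeping slip in your final paragraph: since the family whose uniform integrability is asserted is $\bigl(|D_q|^\alpha\bigr)_q$, the $(1+\varepsilon)$-moment criterion requires $\sup_q\int|D_q|^{\alpha(1+\varepsilon)}\,dz<+\infty$, so the correct choice is $1+\varepsilon=\beta/\alpha$ rather than $\beta=1+\varepsilon$; this is cosmetic and does not affect the validity of your argument.
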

\begin{proof}Let $M>0$, $\beta=\frac2{\alpha}$ and $\beta'$ such that $\frac1{\beta}+\frac1{\beta'}=1$. Then, by  H\"{o}lder's inequality, we can write
\begin{eqnarray*}
\int_{\Big\{\Big|\frac1{\sqrt{q}}\sum_{j=0}^{q-1 }z^j\Big|^\alpha>M \Big\}}
\Big|\frac1{\sqrt{q}}\sum_{j=0}^{q-1 }z^j\Big|^\alpha dz &\leq& \Big\|\frac1{\sqrt{q}}\sum_{j=0}^{q-1}z^j\Big\|_2^{\frac2{\beta}} \Big(dz\Big\{\big|\frac1{\sqrt{q}}\sum_{j=0}^{q-1 }z^j\big| > \sqrt[\alpha]{M} \Big\}\Big)^{\frac1{\beta'}}\\
&\leq& \Big(dz\Big\{\big|\frac1{\sqrt{q}}\sum_{j=0}^{q-1 }z^j\big| > \sqrt[\alpha]{M} \Big\}\Big)^{\frac1{\beta'}},
\end{eqnarray*}
since $\ds \Big\|\frac1{\sqrt{q}}\sum_{j=0}^{q-1 }z^j\Big\|_2=1.$ Whence, by Markov inequality , we get
$$dz\Big\{\big|\frac1{\sqrt{q}}\sum_{j=0}^{q-1 }z^j\big| > \sqrt[\alpha]{M} \Big\}
\leq \frac1{\sqrt[\alpha]{M}}\Big\|\frac1{\sqrt{q}}\sum_{j=0}^{q-1 }z^j\Big\|_1 .$$
This gives
$$dz\Big\{\big|\frac1{\sqrt{q}}\sum_{j=0}^{q-1 }z^j\big| > \sqrt[\alpha]{M} \Big\}
\leq \frac1{\sqrt[\alpha]{M}}$$
by Cauchy-Schwarz inequality.
Letting $M \longrightarrow +\infty$, we conclude that
$$\int_{\Big\{\Big|\frac1{\sqrt{q}}\sum_{j=0}^{q-1 }z^j\Big|^\alpha>M \Big\}}
\Big|\frac1{\sqrt{q}}\sum_{j=0}^{q-1 }z^j\Big|^\alpha dz \tend{M}{+\infty} 0.$$
and the proof of the lemma is complete.
\end{proof}
\begin{proof}[\textbf{Proof of Proposition \ref{Key2}}]By $\eqref{Obs2}$, we have
\begin{eqnarray*}
P_q(z)&=&\frac1{\sqrt{q}}\sum_{j=0}^{q-1}z^j-2R_q(z),~~~~~~~~~~\forall z \in S^1.
\end{eqnarray*}
We further have, for any $z \neq1$,
$$\Big|\frac1{\sqrt{q}}\sum_{j=0}^{q-1 }z^j\Big| \tend{q}{+\infty}0.$$
Hence
$$\Big\|\frac1{\sqrt{q}}\sum_{j=0}^{q-1 }z^j\Big\|_1 \tend{q}{+\infty}0,$$
by Vitali's convergence theorem. Therefore
$$\Big\|\big|P_q(z)\big|-\big|2R_q(z)\big|\Big\|_1 \tend{q}{+\infty}0.$$
It follows that $(P_q(z))_{q \geq 0}$ is $L^1$-flat if and only if
$$\Big\|\big|R_q(z)\big|-\frac12\Big\|_1 \tend{q}{+\infty}0.$$
Assuming that $(P_q(z))_{q \geq 0}$ is $L^1$-flat. It follows that we have
$$\Big\|R_q(z)\Big\|_1 \tend{q}{+\infty}\frac12.$$
Whence, by Cauchy-Schwarz inequality, we can write
$$\Big\|R_q(z)\Big\|_2=\sqrt{\frac{\# \Big\{j:\widehat{R_q}(j)=1\Big\}}{q}} \geq \Big\|R_q(z)\Big\|_1.$$
Letting $q\longrightarrow +\infty$, we obtain
\begin{eqnarray}\label{eq:1}
\lim_{q \longrightarrow +\infty}\frac{\# \Big\{j:\widehat{R_q}(j)=1\Big\}}{q}= \fr({-1}) \geq \frac14.
\end{eqnarray}
We now apply this arguments again, with $R_q$ replaced by $Q_q$, to obtain
\begin{eqnarray}\label{eq:2}
\fr(1)=1-{\fr({-1})} \geq \frac1{4}.
\end{eqnarray}
The proof of the lemma follows by Combining \eqref{eq:1} with \eqref{eq:2}.
\end{proof}
At this point, we conclude that the proof of the main result (Theorem \ref{mainL1}), when the frequency of $-1$ is not in $[\frac14,\frac34]$, follows easily from Proposition \ref{Key2}.\\

From Lemma \ref{simple} it is a simple matter to strengthen Proposition \ref{Key1} as follows
\begin{Prop}\label{Key3}Let $(P_q(z))_{q \geq 0}$ be a sequence of $L^2$-normalized Littlewood polynomials. Suppose that
 $(P_q(z))_{q \geq 0}$ are $L^\alpha$-flat polynomials, $0<\alpha<2$, then the frequency of negative coefficients $\fr(-1)$ verify
$$\frac14 \leq \fr(-1) \leq \frac34,$$
\end{Prop}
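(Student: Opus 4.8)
The plan is to transcribe the argument of Proposition \ref{Key2}, which was carried out in $L^1$, into the $L^\alpha$ quasi-norm for $0<\alpha<2$; the only genuinely new ingredient will be Lemma \ref{simple}, whose role is to control the Dirichlet-type kernels $D_q$ in $L^\alpha$ even in the range $1<\alpha<2$, where one loses the free bound that $L^1$ enjoys. I would start from the two decompositions \eqref{Obs1} and \eqref{Obs2}, namely $P_q=D_q-2R_q=2Q_q-D_q$, together with the elementary pointwise estimates
$$\big||2R_q(z)|-|P_q(z)|\big|\le|D_q(z)|\quad\text{and}\quad\big||2Q_q(z)|-|P_q(z)|\big|\le|D_q(z)|,$$
which hold for every $z\in S^1$ by the reverse triangle inequality applied to the pairs $(2R_q,-P_q)$ and $(2Q_q,-P_q)$, since $2R_q+P_q=D_q$ and $2Q_q-P_q=D_q$.

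Next I would kill the kernel: because $D_q(z)\to0$ for every $z\neq1$ and, by Lemma \ref{simple}, the family $(|D_q|^\alpha)$ is uniformly integrable for $0<\alpha<2$, Vitali's convergence theorem (Theorem \ref{Vitali}) gives $\big\|D_q\big\|_\alpha\to0$. Feeding this into the two pointwise estimates above and using the hypothesis that $|P_q|\to1$ in $L^\alpha$ (combined with the triangle inequality when $1\le\alpha<2$, or with its $\alpha$-power version $|a+b|^\alpha\le|a|^\alpha+|b|^\alpha$ when $0<\alpha<1$) forces $|2R_q|\to1$ and $|2Q_q|\to1$ in $L^\alpha$, hence
$$\big\|R_q\big\|_\alpha\to\tfrac12\quad\text{and}\quad\big\|Q_q\big\|_\alpha\to\tfrac12.$$

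Finally, since $dz$ is a probability measure and $\alpha<2$, the power-mean (Jensen) inequality yields $\big\|R_q\big\|_\alpha\le\big\|R_q\big\|_2$ and $\big\|Q_q\big\|_\alpha\le\big\|Q_q\big\|_2$. As in Proposition \ref{Key2} one has $\big\|R_q\big\|_2^2=\#\{j:\widehat{R_q}(j)=1\}/q\to\fr(-1)$ and $\big\|Q_q\big\|_2^2=\#\{j:\widehat{Q_q}(j)=1\}/q\to\fr(1)=1-\fr(-1)$, so passing to the limit gives $\sqrt{\fr(-1)}\ge\tfrac12$ and $\sqrt{1-\fr(-1)}\ge\tfrac12$, which is precisely $\tfrac14\le\fr(-1)\le\tfrac34$.

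The step I expect to be the crux is establishing $\big\|D_q\big\|_\alpha\to0$ for $1<\alpha<2$: here the a.e. convergence $D_q\to0$ comes with no dominating function, so one cannot invoke dominated convergence and must genuinely use the uniform-integrability input of Lemma \ref{simple} to apply Vitali's theorem — this is exactly the point at which Proposition \ref{Key1} is being strengthened, since it lets one bypass passage to an $L^1$-flat subsequence. A secondary, purely technical matter is the sub-unit range $0<\alpha<1$, where $\|\cdot\|_\alpha$ is only a quasi-norm; there one replaces the triangle inequality by $|a+b|^\alpha\le|a|^\alpha+|b|^\alpha$ and uses $\big||a|^\alpha-|b|^\alpha\big|\le|a-b|^\alpha$ to transfer convergence to the integrals $\int|2R_q|^\alpha$ and $\int|2Q_q|^\alpha$, but none of the conclusions change.
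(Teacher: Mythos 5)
Your proof is correct and follows exactly the route the paper intends for Proposition \ref{Key3}: transcribe the argument of Proposition \ref{Key2} into $L^\alpha$, using Lemma \ref{simple} together with Vitali's theorem to get $\|D_q\|_\alpha\to 0$ (the only new point in the range $1<\alpha<2$), and then the monotonicity $\|\cdot\|_\alpha\le\|\cdot\|_2$ with $\|R_q\|_2^2\to\fr(-1)$ and $\|Q_q\|_2^2\to 1-\fr(-1)$. Your handling of the quasi-norm case $0<\alpha<1$ via $|a+b|^\alpha\le|a|^\alpha+|b|^\alpha$ is a correct filling-in of a detail the paper leaves implicit.
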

Proposition \ref{Key3} is related to the following theorem due to Jensen-Jensen and H{\o}holdt  \cite{Hoholdt-JJ}.
\begin{Th}\label{JJH}Let $(P_q(z))_{q \geq 0}$ be a sequence of  $L^2$-normalized Littlewood polynomials. Suppose that
$$\frac{\#\big\{~~j~~: \widehat{P_q}(j)=-1\big\}}{q} \longrightarrow \fr(-1)$$ as $q\longrightarrow +\infty$. If $\fr(-1) \neq \frac12$ then
$$\Big\|P_q\Big\|_4 \tend{q}{+\infty}+\infty.$$
\end{Th}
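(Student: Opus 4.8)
The plan is to bypass the decomposition $P_q = D_q - 2R_q$ — which is tempting but unhelpful here, since $\big\|D_q\big\|_4 \tend{q}{+\infty} +\infty$ as well, so the reverse triangle inequality yields nothing — and instead to compute $\big\|P_q\big\|_4^4$ directly through the autocorrelations of the sign sequence. Writing $a_m = \sum_{\substack{0\le j,k\le q-1\\ j-k=m}}\epsilon_j\epsilon_k$ for the unnormalized autocorrelation coefficients, the first step is to expand $|P_q(z)|^2 = \frac1q\sum_{m=-(q-1)}^{q-1} a_m z^m$ on $S^1$ (here $a_0 = q$ and $a_{-m}=a_m$) and apply Parseval to the trigonometric polynomial $|P_q|^2$, obtaining
\begin{eqnarray*}
\big\|P_q\big\|_4^4 = \int_{S^1}\big||P_q(z)|^2\big|^2\,dz = \frac1{q^2}\sum_{m=-(q-1)}^{q-1} a_m^2 = 1 + \frac{2}{q^2}\sum_{m=1}^{q-1}a_m^2,
\end{eqnarray*}
which is the usual merit-factor identity.

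The second, and decisive, step is to observe that the aggregate of all the autocorrelations is a perfect square governed by the imbalance of the sequence:
\begin{eqnarray*}
\sum_{m=-(q-1)}^{q-1} a_m = \sum_{0\le j,k\le q-1}\epsilon_j\epsilon_k = \Big(\sum_{j=0}^{q-1}\epsilon_j\Big)^2 = q^2\,\delta_q^2,
\end{eqnarray*}
where $\delta_q = 1 - 2\,\#\{j:\epsilon_j=-1\}/q \tend{q}{+\infty} 1 - 2\fr(-1) =: \delta$. The hypothesis $\fr(-1)\neq\frac12$ enters exactly here: it forces $\delta\neq 0$, so that $\sum_m a_m$ is of order $q^2$.

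Finally I would convert this first-moment information into a second-moment lower bound via Cauchy-Schwarz over the $2q-1$ indices $m$, namely $\big(\sum_{m} a_m\big)^2 \le (2q-1)\sum_{m} a_m^2$, which together with the two displays above gives
\begin{eqnarray*}
\big\|P_q\big\|_4^4 = \frac1{q^2}\sum_{m} a_m^2 \ge \frac{1}{q^2(2q-1)}\Big(\sum_m a_m\Big)^2 = \frac{q^2\,\delta_q^4}{2q-1}.
\end{eqnarray*}
Since $\frac{q^2}{2q-1}\sim\frac q2\to +\infty$ while $\delta_q^4\to\delta^4>0$, this already yields $\big\|P_q\big\|_4^4\to +\infty$, hence $\big\|P_q\big\|_4 \tend{q}{+\infty}+\infty$.

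I expect no serious obstacle: the argument is elementary once the correct quantity is isolated. The only points requiring care are the justification of Parseval applied to $|P_q|^2$ (legitimate because $|P_q|^2$ is a genuine trigonometric polynomial of degree at most $q-1$) and the bookkeeping of the limit $\delta_q\to\delta$, where one should record that $\fr(-1)\neq\frac12$ guarantees a strictly positive bound $\delta_q^4\ge \delta^4/2$ for all large $q$. Conceptually the heart of the matter is that a nonzero mean $\sum_j\epsilon_j\sim q\delta$ makes $P_q(1)\sim\sqrt q\,\delta$ large, and, by Cauchy-Schwarz, such a concentrated large value cannot coexist with a bounded $L^4$ norm; the autocorrelation identity is merely the clean algebraic device that makes this precise.
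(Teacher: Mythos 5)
Your proof is correct, but it takes a genuinely different route from the paper's. The paper obtains Theorem \ref{JJH} as an immediate corollary of the stronger Theorem \ref{mainL12}: there, for $\alpha>2$ and $\beta=\alpha/2$, the Marcinkiewicz--Zygmund inequalities applied to the trigonometric polynomial $|P_q|^2-1$ (retaining only the sample at $z=1$) give $\big\||P_q|^2-1\big\|_\beta^\beta\geq \frac{A_\beta}{q}\big||P_q(1)|^2-1\big|^\beta$, and the identity $|P_q(1)|^2=q\big(1-2\frac{n_q}{q}\big)^2$, with $n_q$ the number of coefficients equal to $-1$, then forces $\|P_q\|_\alpha\to+\infty$ whenever $\fr(-1)\neq\frac12$. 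You replace this machinery by the merit-factor identity $\|P_q\|_4^4=\frac1{q^2}\sum_m a_m^2$, the first-moment computation $\sum_m a_m=\big(\sum_j\epsilon_j\big)^2=q^2\delta_q^2$, and Cauchy--Schwarz over the $2q-1$ autocorrelation indices; all three steps check out, including $a_0=q$, $a_{-m}=a_m$, and the legitimacy of Parseval for the trigonometric polynomial $|P_q|^2$. It is worth noticing that your Cauchy--Schwarz step is exactly an elementary, explicit-constant version of the paper's one-point Marcinkiewicz--Zygmund bound in the special case $\beta=2$: since $|P_q(1)|^2=\frac1q\sum_m a_m$, your inequality reads $\|P_q\|_4^4\geq |P_q(1)|^4/(2q-1)$, so both arguments ultimately exploit the same source of divergence, namely $|P_q(1)|^2=q\,\delta_q^2\to\infty$. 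As for what each approach buys: yours is self-contained, close in spirit to the original Jensen--Jensen--H{\o}holdt computation, and by monotonicity of $L^p$ norms it even yields $\|P_q\|_\alpha\to\infty$ for all $\alpha\geq4$; but it cannot reach the range $2<\alpha<4$ (interpolation between $L^2$ and $L^4$ bounds $\|P_q\|_\alpha$ from above, not below, in that range), which is precisely what the Marcinkiewicz--Zygmund route of Theorem \ref{mainL12} delivers. One cosmetic remark: your opening dismissal of the decomposition $P_q=\frac1{\sqrt q}\sum_j z^j-2R_q$ is slightly off target --- the paper does use it, but only to evaluate $P_q(1)$ exactly, not for a norm comparison via the triangle inequality.
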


Obviously, Theorem \ref{JJH} will follow immediately form our main result Theorem \ref{mainL12}. Let us give its proof
\begin{proof}[\textbf{Proof of Theorem \ref{mainL12}}]
 Let $\beta=\frac{\alpha}{2}$, and apply Marcinkiewicz-Zygmund inequalities to get
 \begin{eqnarray}\label{MZj}
  \Big\|\big|P_q\big|^2-1\Big\|_\beta^{\beta} \geq \frac{A_\beta}{q}\Big|\Big|\big|P_q(1)\big|^2-1\Big|^\beta.
 \end{eqnarray}
We further have
\begin{eqnarray*}
\big|P_q(1)\big|^2 &=&\Big|\sqrt{q}-2\frac{n_q}{\sqrt{q}}\Big|^2,
\end{eqnarray*}
where $n_q$ is the number of $\eta_j'=1$ which is the number of $\epsilon_j=-1$. This equality is due to the fact that $P_q(z)=\frac1{\sqrt{q}}\sum_{j=0}^{q-1}z^j-2R_q(z),$
and $\ds R_q(1)=\frac{n_q}{\sqrt{q}}=\frac{\# \Pi_q}{\sqrt{q}}.$ From this, we can write

\begin{eqnarray*}
\big|P_q(1)\big|^2&=&q\Big(1-2\frac{n_q}{q}\Big)^2.
\end{eqnarray*}
Whence
\begin{eqnarray*}
  \Big\|\big|P_q\big|^2-1\Big\|_\beta^{\beta}  &\geq& \frac{A_\beta}{q} \Big|q\Big(1-2\frac{n_q}{q}\Big)^2-1\Big|^\beta\\
&\geq& A_\beta \Big|\Big(1-2\frac{n_q}{q}\Big)^2-\frac1{q}\Big|^\beta q^{\beta-1}.
 \end{eqnarray*}
Therefore, by the triangle inequality, we can rewrite \eqref{MZj} as follows
\begin{eqnarray*}
  \Big(\Big\|P_q\Big\|_\alpha^{2}+1\Big)^{\beta} \geq A_\beta \Big|\Big(1-2\frac{n_q}{q}\Big)^2-\frac1{q}\Big|^\beta q^{\beta-1}.
 \end{eqnarray*}
Letting $q\longrightarrow +\infty$, we conclude that
$$\lim_{q\longrightarrow +\infty} \Big(\|P_q\|_\alpha^{2}+1\Big)^{\beta} \geq \big(1-2\fr(-1)\big)^\alpha \lim_{q\longrightarrow +\infty}q^{\beta-1}=+\infty,$$
since $\fr(-1) \neq \frac12$. The proof of the theorem is complete.
 \end{proof}

It follows from our proof that if the sequence of polynomials $P_n$, $n=1,\cdots,$ from the class $\mathcal{L}$ is flat in the
Littlewood sense then the frequency of -1 is $\frac12$.

\section{Proof of Theorem \ref{mainL2}.}
The main tool in the proof of our second main result is the following Littlewood's criterion of flatness.
\begin{Th}[Littlewood's criterion \cite{LittlewoodC}] Let $\ds f_n(t)=\sum_{j=1}^{n}a_m \cos(m t+\phi_m)$ and assume that we have
$$\sum_{m=1}^{n}a_m^2 \leq \frac{K}{n^2} \sum_{m=1}^{n}m^2a_m^2,$$
for some absolute constant $K$. Then, for any $\alpha>0$ there exists a constant $A(k,\alpha)$ such that
$$
  \begin{array}{ll}
    \|f_n\|_\alpha \leq \big(1-A(k,\alpha)\big)\|f_n\|_2, & \hbox{ if $\alpha<2$;} \\
    \|f_n\|_\alpha \geq \big(1+A(k,\alpha)\big)\|f_n\|_2, & \hbox{ if $\alpha>2$.}
  \end{array}
$$
\end{Th}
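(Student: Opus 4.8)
The plan is to recast the spectral hypothesis as a lower bound on $\|f_n'\|_2$, to reduce both asserted inequalities to a single quantitative statement that $|f_n|$ must ``dip'', and finally to extract that dip estimate from the degree of $f_n$ together with the classical Bernstein and Nikolskii inequalities.

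First I would apply Parseval's identity. With the normalized measure on the circle one has $\|f_n\|_2^2=\frac12\sum_{m=1}^n a_m^2$ and $\|f_n'\|_2^2=\frac12\sum_{m=1}^n m^2 a_m^2$, so the hypothesis is precisely $\|f_n'\|_2^2\geq \frac{n^2}{K}\|f_n\|_2^2$; combined with the trivial bound $\|f_n'\|_2^2\leq n^2\|f_n\|_2^2$ it says that the spectrum of $f_n$ is genuinely spread toward the top frequency and, in particular, is not concentrated at low frequencies. Normalizing $\|f_n\|_2=1$ and setting $Y=|f_n|^2$, so that $\int Y\,dt=1$ and $\|f_n\|_\alpha^\alpha=\int Y^{\alpha/2}\,dt$, the whole statement becomes a comparison of $\int Y^{\alpha/2}\,dt$ with $1$.

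The second step reduces both inequalities to the single estimate
$$\big|\{t:\ Y(t)\leq \tfrac12\}\big|\ \geq\ c(K)>0. \qquad (\star)$$
Indeed, let $\psi(x)=x^{\alpha/2}-1-\frac\alpha2(x-1)$; convexity ($\alpha>2$) gives $\psi\geq0$ and concavity ($0<\alpha<2$) gives $\psi\leq0$, while in either case a short computation shows $|\psi(x)|\geq|\psi(\tfrac12)|=:c(\alpha)>0$ for all $x\in[0,\tfrac12]$. Since $\int(Y-1)\,dt=0$, the linear term integrates away and $\int(Y^{\alpha/2}-1)\,dt=\int\psi(Y)\,dt$. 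Restricting to $E=\{Y\leq\tfrac12\}$ and invoking $(\star)$ gives $\int Y^{\alpha/2}\,dt\geq 1+c(\alpha)c(K)$ when $\alpha>2$ and $\int Y^{\alpha/2}\,dt\leq 1-c(\alpha)c(K)$ when $\alpha<2$. Taking $\alpha$-th roots (and recalling $\|f_n\|_2=1$) turns these into $\|f_n\|_\alpha\geq(1+A)\|f_n\|_2$ and $\|f_n\|_\alpha\leq(1-A)\|f_n\|_2$ with an explicit $A=A(K,\alpha)$.

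The main obstacle is $(\star)$, and this is where the hypothesis must be used in an essential way. The only configuration in which $|f_n|$ could fail to dip is a smoothed square wave $f_n\approx\pm1$ with few sign changes; but such a function has a low-frequency-dominated spectrum and therefore violates $\|f_n'\|_2^2\geq n^2/K$, so the hypothesis is exactly designed to exclude it. To prove $(\star)$ quantitatively I would split on the size of $\|f_n\|_\infty$. If $\|f_n\|_\infty$ is large, then by $\int Y\,dt=1$ most of the $L^2$-mass sits on a thin set of tall peaks, forcing $Y<\tfrac12$ on almost all of the circle, so $(\star)$ holds with $c(K)$ near $1$; Nikolskii's inequality $\|f_n\|_\infty\leq C\sqrt n\,\|f_n\|_2$ serves to delimit this regime. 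If instead $\|f_n\|_\infty=O(1)$, I would combine Bernstein's inequality $\|f_n'\|_\infty\leq n\|f_n\|_\infty$ with the hypothesis $\|f_n'\|_2^2\geq n^2/K$ to force $\gtrsim n/K$ sign changes of $f_n$; since $f_n$ has degree $n$ it crosses each level at most $2n$ times, and near every sign change at which $|f_n|$ rises above $\tfrac1{\sqrt2}$ the function must traverse $[-\tfrac1{\sqrt2},\tfrac1{\sqrt2}]$ with slope at most $\|f_n'\|_\infty=O(n)$, contributing a subinterval of length $\gtrsim 1/n$ to $E$; summing yields $|E|\gtrsim 1/K$. Making this case analysis rigorous — controlling the overlaps of the dip intervals via the coarea/level-crossing formula and handling the intermediate range of $\|f_n\|_\infty$ — is the technical heart of the argument and is exactly the content of Littlewood's original estimate in \cite{LittlewoodC}.
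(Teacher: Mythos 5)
First, a point of comparison: the paper does not prove this theorem at all — it is quoted with citation from Littlewood's paper \cite{LittlewoodC} and used as a black box in the proof of Theorem \ref{mainL2} — so your attempt has to stand on its own, and it does not. Your Parseval step and the convexity bookkeeping with $\psi(x)=x^{\alpha/2}-1-\frac{\alpha}{2}(x-1)$ are correct as a one-way implication, but they reduce the theorem to the dip estimate $(\star)$, and $(\star)$ is \emph{false} under the stated hypothesis. Counterexample: let $g_n$ be the Fej\'er mean of degree $\lfloor\sqrt{n}\rfloor$ of the mean-zero square wave $\mathrm{sign}(\cos t)$, and set $f_n(t)=g_n(t)+\frac{1}{10}\cos(nt)$, normalized in $L^2$ (this has the required form, with no constant term). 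Then $\sum a_m^2\le 2(1+o(1))\cdot(1+\tfrac{1}{200})$ while $\sum m^2a_m^2\ge 2\cdot\frac{1}{2}\cdot\frac{n^2}{100}$, so the hypothesis holds with an absolute constant $K$ (about $201$). Yet outside two shrinking neighborhoods of the jumps — a set of measure $O(n^{-1/2}\cdot n^{1/4})=o(1)$ — one has $|f_n|\ge 1-o(1)-\frac{1}{10}>\frac{1}{\sqrt 2}$, so $|\{\,|f_n|^2\le \tfrac12\,\}|\to 0$ and no lower bound $c(K)>0$ can exist. Note that the conclusion of the theorem still holds for this $f_n$ (e.g.\ $\|f_n\|_4^4/\|f_n\|_2^4\approx 1+2\epsilon^2$ with $\epsilon=\frac1{10}$), but the excess comes from the small high-frequency ripple oscillating $|f_n|^2$ about $1$ everywhere, not from $|f_n|$ dipping below a fixed threshold on a set of fixed measure.

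The same example dismantles both halves of your case analysis for $(\star)$. In the bounded case, $\|f_n\|_\infty=O(1)$ together with $\|f_n'\|_2^2\ge n^2/K$ does \emph{not} force $\sim n/K$ sign changes: here $\|f_n\|_\infty\le 1.2$, $\|f_n'\|_2\ge n/20$, and $f_n$ has exactly two sign changes, because the high-frequency energy rides as a ripple on a plateau of height $\approx 1$. In the unbounded case, $\|f_n\|_\infty$ large does not force $|f_n|^2<\tfrac12$ on most of the circle: a single tall narrow spike carries negligible $L^2$ mass, and Chebyshev at level $\tfrac12$ with $\int|f_n|^2=1$ gives only the vacuous bound $|\{|f_n|^2>\tfrac12\}|\le 2$. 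What the spectral hypothesis actually yields is a quantitative anti-concentration of $|f_n|^2$ around its mean (in the example, oscillation of amplitude $\sim K^{-1/2}$ on a set of full measure), and converting that into bounds on $\int |f_n|^\alpha$ uniformly in the two regimes $\alpha<2$, $\alpha>2$ is the genuinely hard part; your closing sentence defers precisely this to ``Littlewood's original estimate,'' i.e.\ it cites the theorem being proved rather than proving it.
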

Notice that we have
$$\big\|f_n'\big\|_{2} \leq n \big\|f_n\big\|_2,$$
 by Bernstein-Zygmund inequalities \cite[Theorem 3.13, Chapter X, p. 11]{Zygmund}. Furthermore,
the assumption in the Littlewood's criterion  say that there is a constant $K$ such that
$$\big\|f_n'\big\|_{2} \geq K n \big\|f_n\big\|_2.$$

We proceed now to prove our second main result.\\

Let $(P_n(z))$ be a sequence of palindromic polynomials from class $\mathcal{L}$ and put
 $$P_n(z)=\sum_{j=0}^{n}\epsilon_j z^j,~~~~~~~~~~n=2,4,6\cdots, ~~~z \in S^1.$$

A straightforward computation gives
$$P_n(z)=z^{\frac{n}{2}}L_n(z)-\epsilon_{\frac{n}{2}}z^{\frac{n}{2}},~~~~~~~~~~~~~\forall z \in S^1,$$
where
$$L_n(z)=\sum_{k=0}^{\frac{n}{2}}\epsilon_k \sigma_{\frac{n}{2}-k}(z),$$
and
$$ \sigma_{l}(z)=z^l+\frac1{z^l}.$$
Therefore, for any $z \in S^1$, we have
$$L_n(\theta)=\sum_{k=0}^{\frac{n}{2}}a_{\frac{n}{2}-k}\cos(k\theta),~~~~~~~~~~a_k=2\epsilon_k.$$
Applying the Littlewood criterion, it follows that $L_n$ are not $L^\alpha$-flat, $\alpha \geq 0$. We thus conclude that
$(P_n)$ are not  $L^\alpha$-flat, $\alpha \geq 0$. This finish the proof of our second main result.

\section{Appendix. \small{Proof of Theorem \ref{mainL1} when the frequency of $-1$ is $1/4$ or $3/4$.}\\ jointly with M. G. Nadkarni}
 For the proof of our first main result when the frequency of $-1$ is equal to $\frac14$ or $\frac34$, we need some tools from \cite{Abd-Nad}. \\
 Let $Q(z) = \ds \frac{1}{\sqrt m}\sum_{j=0}^{q-1}\eta_jz^j$ be a polynomial in the class $\mathcal{NB}$, where $m = \ds \sum_{j=0}^{q-1}\eta_j$, which is the number of nonzero terms in $Q$. Note that $Q(1) = \sqrt m$.

 $$\mid Q(z)\mid^2 = 1 + \sum_{\overset{k=-q-1}{k\neq 0}}^{q-1}a_kz^{k} ,$$
where each  $a_k$ is a sum of terms of the type $\eta_i\eta_j\frac{1}{m}, i\neq j$. Note that for each $k$, $a_{-k} = a_k$.
Write
$$L = \sum_{\overset{j=-(q -1)}{j\neq 0}}^{q-1}a_j =\mid  Q(1)\mid^2 -1 = m-1.$$
Consider the random variables  $X(k) =z^{k} - a_k, -(q-1)\leq k \leq q-1$ with respect to the measure
$\nu = \big|Q(z)\big|^2dz$. We write $m(k,l) = \int_{S^1}X(k)\overline{X(l)}d\nu$, $-(q-1) \leq k,l\leq q-1, k, l \neq 0$ and $M$ for
the covariance  matrix with entries $m(k,l), -(q-1) \leq k,l\leq q-1, k, l \neq 0$. We call $M$ the covariance matrix
associated to $\nu =\mid Q(z)\mid^2dz$. Since linear combination of $X(k), -(q-1) \leq k\leq q-1, k\neq 0$, can vanish at no more
than a finite set in $S^1$, and, $\nu$ is non discrete, the random variables $X(k), -(q-1) \leq k \leq q-1, k\neq 0$ are linearly
independent, whence the covariance matrix $M$ is non-singular. $M$ is a $2(q-1)\times 2(q-1)$ positive definite matrix.\\

Note that

$$m_{i,j} =\int_{S^1}z^{i -j}d\nu  - a_ia_j,~~~~ m_{i,i} = 1 -  a_i^2$$

Let $r(Q) = r$ denote the sum of the entries of the matrix $M$, $r$ is positive.  Let
$C(Q) = C =$ sum of the absolute values of the entries of $M$. Note that $ r \leq C$. Also note that since
each $\big|m_{i,j}\big| \leq 1$ we have
$$C \leq (2q-1)^2 <4q^2.$$

We will now consider a sequence $Q_n(z),$ $j=1,2,\cdots$ of polynomials from the class $\mathcal{NB}$. The quantities $M(Q_n)$, $r(Q_n), C(Q_n)$ etc will now written as $M_n, r_n, C_n$ etc.\\

The following theorem is a special case of Theorem 5.1 in \cite{Abd-Nad}.
\begin{Th}\label{elab-nad}
If $Q_n(z)$, $n=1,2,\cdots,$ is an $a.e.~~(dz)$ flat sequence from the class $\mathcal{NB}$, then
$$\frac{C_n}{m_n^2} \tend{n}{+\infty}+\infty.$$
\end{Th}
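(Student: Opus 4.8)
The plan is to bound $C_n$ from below by the \emph{signed} sum $r_n$ of the entries of $M_n$ and to prove that already $r_n/m_n^2 \to +\infty$. Since $C_n = \sum_{i,j}|m_{i,j}| \ge \big|\sum_{i,j}m_{i,j}\big| = |r_n|$, it is enough to control $r_n$, and the first task is to recognize $r_n$ as a variance. Writing $K(z) = \sum_{k=-(q-1)}^{q-1} z^k$ for the Dirichlet kernel and $\nu = |Q_n(z)|^2\,dz$, one checks that $\sum_{k \neq 0} X(k) = (K-1)-(m-1) = K - m$ and that $\int_{S^1}(K-m)\,d\nu = 0$, so that
$$ r_n \;=\; \int_{S^1}\Big|\textstyle\sum_{k\neq 0}X(k)\Big|^2 d\nu \;=\; \int_{S^1}\big(K(z)-m\big)^2\,\big|Q_n(z)\big|^2\,dz \;\ge\; 0 . $$
In particular $C_n \ge r_n \ge 0$, and the whole problem reduces to showing $\dfrac{1}{m_n^2}\displaystyle\int_{S^1}(K-m)^2|Q_n|^2\,dz \to +\infty$.

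The second step is a \emph{deterministic} lower bound on $r_n$, using only $m_n \le q_n$ and the concentration of both $K$ and $Q_n$ near $z=1$. On the arc $|\theta|\le \frac{1}{2q_n}$ (with $z=e^{i\theta}$) the standard Dirichlet-kernel estimate gives $K(e^{i\theta}) \ge \tfrac{3}{2}q_n$, while from $Q_n(1)=\sqrt{m_n}$ and the crude bound $|Q_n(e^{i\theta})-Q_n(1)| \le \tfrac{1}{\sqrt{m_n}}\sum_j \eta_j\,|e^{ij\theta}-1| \le \sqrt{m_n}\,q_n|\theta|$ one gets $|Q_n(e^{i\theta})|^2 \ge m_n/4$ on the same arc. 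Since $m_n \le q_n$ we have $K-m_n \ge \tfrac12 q_n$ there, and integrating over this arc, whose $dz$-measure is $\tfrac{1}{2\pi q_n}$, yields $r_n \ge c\,q_n m_n$ for an absolute constant $c>0$. Hence $C_n/m_n^2 \ge r_n/m_n^2 \ge c\,q_n/m_n$, a bound that holds for \emph{every} polynomial of the class, flat or not.

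The third step supplies the decisive input from flatness, namely $m_n/q_n \to 0$. Here I would argue by concentration of $L^2$-mass. The arc estimate above already shows $\int_{|\theta|\le 1/(2q_n)}|Q_n|^2\,dz \ge c'\,m_n/q_n$, so a fixed positive fraction of the unit $L^2$-mass of $Q_n$ would sit in an arc shrinking to $\{1\}$ whenever $\limsup_n m_n/q_n>0$. On the other hand, for each fixed $\delta>0$, Fatou's lemma applied to $|Q_n|^2 \to 1$ a.e.\ gives $\liminf_n \int_{|\theta|>\delta}|Q_n|^2\,dz \ge dz\{|\theta|>\delta\}$, and since $\int_{S^1}|Q_n|^2\,dz=1$ this forces $\limsup_n \int_{|\theta|\le\delta}|Q_n|^2\,dz \le dz\{|\theta|\le\delta\} = \delta/\pi$. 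Comparing the two inequalities for $n$ large enough that $\frac{1}{2q_n}<\delta$ gives $c'\limsup_n m_n/q_n \le \delta/\pi$, and letting $\delta \to 0$ yields $m_n/q_n \to 0$. Substituting into $C_n/m_n^2 \ge c\,q_n/m_n$ finishes the proof.

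The step I expect to be the main obstacle is the third one: a.e.\ flatness provides no domination near $z=1$, where $|Q_n(1)|^2=m_n$ blows up, so the usual convergence theorems do not apply on the spike. The force of the argument lies precisely in quantifying how much $L^2$-mass the spike at $1$ must carry (the arc bound of the second step) and pitting it against the Fatou inequality away from $1$; this is exactly the mechanism underlying Theorem 5.1 of \cite{Abd-Nad}, of which the present statement is the specialization to the class $\mathcal{NB}$. One could alternatively verify the hypotheses of that theorem and quote its conclusion directly.
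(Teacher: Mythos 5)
Your proof is correct, but it takes a genuinely different route from the paper, which in fact offers no proof of this statement at all: Theorem \ref{elab-nad} is simply quoted as a special case of Theorem 5.1 of \cite{Abd-Nad}, and Corollary \ref{elab-nad2} (that $m_n/q_n \to 0$) is then deduced from it via the trivial bound $C_n \le 4q_n^2$. You invert that logic. Your identification of $r_n$ as a variance is right: the $X(k)$ are centered under $\nu$ (since $\int_{S^1} z^k\,d\nu = a_k$ for $k\neq 0$), and $\sum_{k\neq 0}X(k) = K - m$ with $K$ the Dirichlet kernel, so indeed $C_n \ge r_n = \int_{S^1}(K-m)^2\,|Q_n|^2\,dz \ge 0$. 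The arc estimates check out for, say, $q_n \ge 3$: on $|\theta| \le \frac{1}{2q_n}$ one has $K \ge \frac{3}{2}q_n$, hence $K - m_n \ge \frac{1}{2}q_n$, and $|Q_n|^2 \ge m_n/4$, giving the deterministic bound $C_n \ge r_n \ge \frac{q_n m_n}{32\pi}$ valid for \emph{every} polynomial of the class; and your Fatou/mass-concentration argument correctly extracts $m_n/q_n \to 0$ from a.e.\ flatness. The only tacit hypothesis is $q_n \to \infty$, which is implicit in the theorem anyway: a fixed $Q \in \mathcal{NB}$ with $q \ge 2$ cannot satisfy $|Q| = 1$ a.e., since the coefficient of $z^{q-1}$ in $|Q|^2$ is $\eta_0\eta_{q-1}/m = 1/m \neq 0$. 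What your route buys: it is self-contained and elementary where the paper outsources the work to \cite{Abd-Nad}; it proves the stronger statement that already the \emph{signed} sum satisfies $r_n/m_n^2 \to +\infty$, with the explicit rate $C_n/m_n^2 \ge c\,q_n/m_n$; and it reverses the paper's deduction, making $m_n/q_n \to 0$ the input and Theorem \ref{elab-nad} the output rather than the other way around. Worth noting: since the appendix uses Theorem \ref{elab-nad} only through Corollary \ref{elab-nad2}, your third step alone (the spike bound pitted against Fatou away from $z=1$) already suffices for the paper's application.
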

We thus get
\begin{Cor}\label{elab-nad2}If $Q_n$ ,$n =1,2,\cdots$ is an $a.e.~~(dz)$ flat sequence then the
ratios $\frac{m_n}{q_n}$, $n =1,2,\cdots$ converge to zero.
\end{Cor}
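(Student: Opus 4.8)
The plan is to chain the quantitative divergence in Theorem \ref{elab-nad} against the elementary upper bound already recorded for $C_n$. Recall that for any polynomial $Q$ in the class $\mathcal{NB}$ of degree $q-1$ we established $C(Q) \leq (2q-1)^2 < 4q^2$, simply because the covariance matrix $M$ is $2(q-1)\times 2(q-1)$ and each of its entries has modulus at most $1$. Applied to the sequence $Q_n$, this reads $C_n < 4q_n^2$.

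First I would invoke Theorem \ref{elab-nad}: since $Q_n$ is assumed to be an $a.e.~(dz)$ flat sequence from $\mathcal{NB}$, we have
$$\frac{C_n}{m_n^2} \tend{n}{+\infty} +\infty.$$
Then I would insert the bound $C_n < 4q_n^2$ to obtain
$$4\,\frac{q_n^2}{m_n^2} > \frac{C_n}{m_n^2} \tend{n}{+\infty} +\infty,$$
so that $\big(q_n/m_n\big)^2 \to +\infty$, and hence $q_n/m_n \to +\infty$. Taking reciprocals yields $m_n/q_n \to 0$, which is exactly the claim.

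There is essentially no obstacle here: the content is entirely contained in Theorem \ref{elab-nad}, and the corollary is a one-line consequence of pairing its conclusion with the crude geometric bound $C_n < 4q_n^2$. If anything, the only point to state carefully is that $m_n \geq 1$ (indeed $m_n \geq 2$ since $\eta_0 = \eta_{q-1} = 1$), so the ratios $C_n/m_n^2$ and $q_n^2/m_n^2$ are well defined and the divergence of the former forces the divergence of the latter.
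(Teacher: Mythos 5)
Your proof is correct and uses exactly the same two ingredients as the paper, namely Theorem \ref{elab-nad} together with the crude bound $C_n < 4q_n^2$; the only difference is that you argue directly ($q_n^2/m_n^2 \to +\infty$, hence $m_n/q_n \to 0$), whereas the paper proceeds by contradiction, extracting a subsequence along which $m_n/q_n \to c > 0$ and deducing the bounded ratio $C_n/m_n^2 \leq 4/c^2$. Your direct phrasing is a minor streamlining (it even avoids the subsequence extraction), not a genuinely different route.
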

\begin{proof} If not there is a subsequence over which the ratios $\frac{m_n}{q_n}$, $n =1,2,\cdots$ converge to a positive constant
$c \leq 1$. We may assume without loss of generality that $\Big(\frac{m_n}{q_n}\Big)$ converge to $c$. Since
$C_n \leq 4 q_n^2$, $n = 1,2,\cdots$, we conclude that
$$\frac{C_n}{m_n^2} \leq \frac{4q_n^2}{m_n^2}\tend{n}{+\infty}\frac4{c^2}<+\infty,$$
which is a contradiction. The corollary follows.
\end{proof}
We proceed now by contradiction to complete the proof of our first main result.\\
Assume that $P_n$ is $a.e.~~ (dz)$ flat and the frequency of $-1$ is $\frac14$. Then,
$S(P_n)$, $n = 1,2,\cdots,$ is also flat in $a.e.~~ (dz)$ sense. We further have that
the frequency of $1$ is  $\frac14$, i.e., $\frac{m_n}{q_n}\longrightarrow \frac14$ as $n\longrightarrow \infty$. We thus get,
 by the formula \eqref{Tmaps},  $T(S(P_n))=Q_n$ is $a.e.~~ (dz)$ flat sequence in $\mathcal{NB}$ with
 $\frac{m_n}{q_n}\tend{n}{\infty}\frac14.$ which is
 impossible by Corollary \ref{elab-nad2}. In the same manner, we can see that the same conclusion hold for $\lim_{n \longrightarrow +\infty}\frac{m_n}{q_n}=\frac3{4},$ by appealing to the formula $(2.3)$. This finish the proof of our first main result.
\begin{rem}$\quad$\\
 1) Formula \eqref{Tmaps} at once shows that if a sequence $P_n, n=1,2,\cdots$ in the class $\mathcal{L}$ is ultraflat
 then
\begin{enumerate}[(i)]
 \item   $\ds \lim_{n \rightarrow \infty}\frac{m_n}{q_n} = \frac{1}{2}$ and
 \item $T(P_n), n=1,2,\cdots$ converges uniformly to $\frac{1}{\sqrt 2}$ on compact subsets of $S^1\setminus\{1\}.$
\end{enumerate}
It is not known if (i) and (ii) are compatible conditions. However, the numerical evidence from \cite{odlyzko} suggest that (i) and (ii) are not compatible.\\

\noindent{}2) Exploring the limit distribution of the sequence of polynomials from class $\mathcal{L}$ can be linked to the exploration of the the limit distribution of the sequence of polynomials from class $\mathcal{NB}$ by \eqref{Tmaps}.  Characterization of the class of distributions which can be a limit distribution  of  a sequence of polynomials from class $\mathcal {N B}$ is an open problem. For a very recent work on the subject, we refer to \cite{fukuyama} and the references therein.
\end{rem}

  \end{document}